\theoremstyle{plain}
\newtheorem{theorem}{Theorem}[section]
\newtheorem{lemma}[theorem]{Lemma}
\newtheorem{prop}[theorem]{Proposition}
\newtheorem{corollary}[theorem]{Corollary}
\theoremstyle{definition}
\newtheorem{definition}{Definition}[section]
\theoremstyle{remark}
\newtheorem*{remark}{Remark}
\newtheorem*{acknowledgements}{Acknowledgements}
\newcommand{\R}{\mathbb{R}}
\newcommand{\Pro}{\mathcal{P}}
\newcommand{\pr}{\mathbb{P}}
\newcommand{\bt}{\mathbf{T}}
\title[Mean speed of convergence in Wasserstein distance]{On the mean speed of convergence of empirical and occupation measures in Wasserstein distance}
  \author{Emmanuel Boissard}
  \author{Thibaut Le Gouic}
  \address{Universit\'e Paul Sabatier}
\begin{document}

\begin{abstract}
In this work, we provide non-asymptotic bounds for the average speed of convergence of the empirical measure in the law of large numbers, in Wasserstein distance.
We also consider occupation measures of ergodic Markov chains. One motivation is the approximation of a probability measure by finitely supported measures (the quantization problem).
It is found that rates for empirical or occupation measures match or are close to previously known optimal quantization rates in several cases. This is notably highlighted in the example 
of infinite-dimensional Gaussian measures.
\end{abstract}

\date{\today}

\maketitle

\section{Introduction}

This paper is concerned with the rate of convergence in Wasserstein distance for the so-called \emph{empirical law of large numbers} :
let $(E, d, \mu)$ denote a measured Polish space, and let

\begin{equation}
L_n = \frac{1}{n} \sum_{i = 1}^{n} \delta_{X_i} 
\end{equation}

 denote the empirical measure
associated with the i.i.d. sample $(X_i)_{1 \leq i \leq n}$ of law $\mu$, then with probability 1, $L_n \rightharpoonup \mu$ as $n \rightarrow + \infty$ 
(convergence is understood in
the sense of the weak topology of measures). This theorem is also known as Glivenko-Cantelli theorem and is due in this form to Varadarajan
\cite{varadarajan1958convergence}.

For $1 \leq p < + \infty$, the \emph{$p$-Wasserstein distance} is defined on the set $\Pro_p(E)^2$ of couples of measures with a finite $p$-th moment by

\begin{equation*}
 W_p^p (\mu, \nu) = \inf_{ \pi \in \Pro(\mu, \nu)} \int d^p(x, y) \pi(dx, dy)
\end{equation*}

where the infimum is taken on the set $\Pro(\mu, \nu)$ of probability measures with first, resp. second, marginal $\mu$, resp. $\nu$. 
This defines a metric on $\Pro_p$, and convergence in this metric is equivalent to weak convergence plus convergence of the moment of order $p$.
These metrics, and more generally the Monge transportation problem from which they originate, have played a prominent role in several areas of 
probability, statistics and the analysis of P.D.E.s : for a rich account, see C. Villani's St-Flour course \cite{optimal_transport_villani}.

Our purpose is to give bounds on the mean speed of convergence in $W_p$ distance for the Glivenko-Cantelli theorem, i.e. bounds for the convergence
$\mathbb{E}(W_p( L_n, \mu)) \rightarrow 0$.
Such results are desirable notably in view of numerical and statistical applications~: indeed, the approximation
of a given probability measure by a measure with finite support in Wasserstein distance is a topic that appears in various guises in the literature, 
see for example \cite{graf2000foundations}.
The first motivation for this work was to extend the results obtained by
F. Bolley, A. Guillin and C. Villani \cite{quantit_conc_ineq} in the case of variables with support in $\R^d$. 
As in this paper, we aim to produce bounds that are non-asymptotic and effective (that is with explicit constants), in order to achieve practical relevance.

We also extend the investigation to the convergence of occupation measure for suitably ergodic Markov chains : again, we have practical applications in mind,
as this allows to use Metropolis-Hastings-type algorithms to approximate an unknown measure (see \ref{subsection_markov_chains} for a discussion of this).

There are many works in statistics devoted to convergence rates in some metric associated with the weak convergence of measures, see 
e.g. the book of A. Van der Vaart and J. Wellner \cite{van1996weak}. 
Of particular interest for us is R.M. Dudley's article \cite{dudley1969speed}, see Remark \ref{remark_dudley}.

Other works have been devoted to convergence of empirical measures in Wasserstein distance, we quote some of them. 
Horowitz and Karandikar 
\cite{horowitz1994mean} gave a bound for the rate of convergence of $\mathbb{E}[ W_2^2(L_n, \mu)]$ to $0$ for general measures supported in $\R^d$ under a moment condition.
M. Ajtai, J. Komlos and G. Tusnady \cite{ajtai1984optimal} and M.Talagrand \cite{talagrand1992matching} studied the related problem of the average cost of matching two
i.i.d. samples from the uniform law on the unit cube in dimension $d \geq 2$. 
This line of research was pushed further, among others, by V. Dobri\'c and J.E. Yukich \cite{dobric1995exact} or F. Barthe and C. Bordenave \cite{barthe2011combinatorial}
(the reader may refer to this last paper for an up-to-date account of the Euclidean matching problem). These papers give a sharp result for
measures in $\R^d$, with an improvement both over \cite{horowitz1994mean} and \cite{quantit_conc_ineq}.
In the case $\mu \in \Pro(\R)$, del Barrio, Gin\'{e} and Matran \cite{del1999central} obtain a central limit theorem for $W_1(L_n, \mu)$ under the condition that
$\int_{- \infty}^{+ \infty} \sqrt{F(t)(1 - F(t))} dt < + \infty$ where $F$ is the cumulative distribution function (c.d.f.) of $\mu$.
In the companion paper \cite{boissard2011exact}, we investigate the case of the $W_1$ distance by using the dual expression of the $W_1$ transportation cost
by Kantorovich and Rubinstein, see therein for more references.

Before moving on to our results, we make a remark on the scope of this work.
Generally speaking, the problem of convergence of $W_p(L_n, \mu)$ to $0$ can be divided in two separate questions : 

\begin{itemize}
 \item the first one is to estimate the \emph{mean rate of convergence}, that is the convergence rate of $\mathbb{E} [ W_p(L_n, \mu) ]$,
 \item while the second one is to study the concentration properties of $W_p(L_n, \mu)$ around its mean, that is to find bounds on the quantities

\begin{equation*}
 \pr (W_p(L_n, \mu) - \mathbb{E} [W_p(L_n, \mu) ] \geq t).
\end{equation*}
\end{itemize}

Our main concern here is the first point. The second one can be dealt with by techniques of measure concentration. 
We will elaborate on this in the case of Gaussian measures (see Appendix \ref{appendix_gaussian}), but not in general.
However, this is a well-trodden topic, and some results are gathered in \cite{boissard2011exact}.

\begin{acknowledgements}
 We thank Patrick Cattiaux for his advice and careful reading of preliminary versions, and Charles Bordenave for introducing us to his work \cite{barthe2011combinatorial}
and connected works.
\end{acknowledgements}

\subsection{Main result and first consequences}

\begin{definition}
For $X \subset E$, the covering number of order $\delta$ for $X$, denoted by $N(X, \delta)$, is defined as
the minimal $n \in \mathbb{N}$ such that there exist $x_1, \ldots, x_n$ in $X$ with

\begin{equation*}
 X \subset \bigcup_{j = 1}^n B(x_i, \delta).
\end{equation*}

\end{definition}

Our main statement is summed up in the following proposition.

\begin{prop} \label{main_prop_independent_case}

Choose $t > 0$. Let $\mu \in \Pro(E)$
with support included in $X \subset E$ with finite diameter $d$ such that $N(X, t) < + \infty$.
We have the bound :

\begin{equation*}
 \mathbb{E}(W_p(L_n, \mu)) \leq c \left( t + n^{-1/2p} \int_{t}^{d/4} N(X, \delta)^{1/2p} d \delta \right).
\end{equation*}

with $c \leq 64/3$.

\end{prop}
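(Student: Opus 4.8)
The plan is to reduce the estimate to a multiscale (dyadic chaining) argument driven by the covering numbers $N(X,\delta)$. Using the hypothesis $N(X,t)<+\infty$, I would first build, for the dyadic scales $\delta_k=d\,2^{-k}$ with $t\leq \delta_k\leq d$, a finite partition $\mathcal{Q}_k$ of $X$ into cells of diameter at most $\delta_k$ and of cardinality at most $N(X,\delta_k/2)$ (a ball of radius $\delta_k/2$ has diameter $\leq\delta_k$), arranged so that $\mathcal{Q}_{k+1}$ refines $\mathcal{Q}_k$ and the coarsest partition is $\{X\}$. Choosing a representative point $x_Q\in Q$ in each cell, one discretizes any probability measure $\rho$ at level $k$ by $\rho_k=\sum_{Q\in\mathcal{Q}_k}\rho(Q)\delta_{x_Q}$; since each bit of mass moves by at most $\delta_k$, one has $W_p(\rho,\rho_k)\leq \delta_k$. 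Stopping the refinement at the scale comparable to $t$ then produces the leading $t$ term, and the trivial bound $\mathrm{diam}\leq d$ together with the absorption of constant factors accounts for the upper cutoff $d/4$.

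The core of the argument is a chaining inequality of the form
\begin{equation*}
 W_p(\mu,\nu)\leq 2\delta_K+\sum_{k=1}^{K}\delta_{k-1}\Bigl(\sum_{Q\in\mathcal{Q}_k}|\mu(Q)-\nu(Q)|\Bigr)^{1/p},
\end{equation*}
which I would establish by telescoping a transport plan scale by scale. The idea is to interpolate between $\nu_K$ and $\mu_K$ through measures $\theta_k$ that carry the coarse-scale masses of $\mu$ down to level $k$ and the fine-scale profile of $\nu$ below it; consecutive measures $\theta_{k-1}$ and $\theta_k$ agree on every level-$(k-1)$ cell mass and differ only in the way the mass of each such cell is split among its level-$k$ subcells. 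Hence $\theta_{k-1}$ can be transported to $\theta_k$ by rearranging mass only within level-$(k-1)$ cells, each move costing at most $\delta_{k-1}^p$ per unit, so that $W_p^p(\theta_{k-1},\theta_k)\leq \delta_{k-1}^p\sum_{Q\in\mathcal{Q}_k}|\mu(Q)-\nu(Q)|$ up to the natural total-variation factor; the triangle inequality for $W_p$ then assembles the sum, the $\delta_K$ boundary terms being $W_p(\mu,\mu_K)$ and $W_p(\nu,\nu_K)$.

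With this deterministic inequality in hand, I would set $\nu=L_n$, take expectations, and use the concavity of $x\mapsto x^{1/p}$ (Jensen) to move $\mathbb{E}$ inside the $1/p$-power at each scale. For a fixed cell $Q$, $n\,L_n(Q)$ is Binomial$(n,\mu(Q))$, whence $\mathbb{E}|L_n(Q)-\mu(Q)|\leq (\mathrm{Var}\,L_n(Q))^{1/2}\leq(\mu(Q)/n)^{1/2}$; summing and applying Cauchy--Schwarz gives $\sum_{Q\in\mathcal{Q}_k}\mathbb{E}|L_n(Q)-\mu(Q)|\leq (|\mathcal{Q}_k|/n)^{1/2}$. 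Raising to the power $1/p$ reproduces exactly the exponents $n^{-1/2p}$ and $N(X,\delta_k)^{1/2p}$, and the resulting dyadic sum $\sum_k\delta_{k-1}N(X,\delta_k)^{1/2p}$ is compared to $\int_t^{d/4}N(X,\delta)^{1/2p}\,d\delta$ by monotonicity of $N$ over each interval $[\delta_{k+1},\delta_k]$, each dyadic term being dominated by a fixed multiple of the corresponding integral piece.

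The main obstacle I expect is the chaining inequality of the second paragraph: one must make the scale-by-scale rearrangement precise, in particular handling the empirical normalization $L_n(Q)/L_n(A)$ that appears in the interpolating measures $\theta_k$ (so that the per-scale discrepancy is genuinely controlled by $\sum_Q|\mu(Q)-L_n(Q)|$), and correctly accounting for the $1/p$-power when passing from the cost $W_p^p$ of a single step to the additive bound on $W_p$. The remaining effort is bookkeeping: tracking the factor $2$ relating radii to diameters, the total-variation factor, the geometric series in $\delta_k$, and the dyadic-sum-to-integral comparison, all of which must be combined to land on the explicit constant $c\leq 64/3$.
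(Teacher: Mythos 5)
Your proposal is correct and follows essentially the same route as the paper's proof: nested partitions at geometric scales (the paper's Lemma \ref{lemma_partitions}), a chaining inequality bounding $W_p(\mu,\nu)$ by a sum over scales of $(\text{scale})\times(\text{total-variation discrepancy at that scale})^{1/p}$ (the paper's Lemma \ref{lemma_induction_cost}), then the binomial variance bound with Cauchy--Schwarz and Jensen, a sum--integral comparison, and the finest scale chosen comparable to $t$. The only differences are technical variants that do not change the argument: you prove the chaining step by interpolating measures that swap conditional profiles cell by cell (and your factor-$2$ concern there is resolved exactly by the triangle inequality on cell masses), whereas the paper matches the common mass $m_j\wedge n_j$ in place and pushes the excess up one level of the tree; likewise you absorb the discretization of $L_n$ into a deterministic $\delta_K$ term where the paper uses a coupling argument.
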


\begin{remark} \label{remark_dudley}
Proposition \ref{main_prop_independent_case} is related in spirit and proof to the results of R.M. Dudley \cite{dudley1969speed} in the case
of the bounded Lipschitz metric

\begin{equation*}
 d_{BL}( \mu, \nu) = \inf_{f 1-\text{Lip}, |f| \leq 1} \int f d ( \mu - \nu).
\end{equation*}

The analogy is not at all fortuitous : indeed, the bounded Lipschitz metric is linked to the $1$-Wasserstein distance 
via the well-known Kantorovich-Rubinstein dual definition of $W_1$ : 

\begin{equation*}
 W_1( \mu, \nu) = \inf_{f 1-\text{Lip}} \int f d (\mu - \nu).
\end{equation*}

The analogy stops at $p = 1$ since there is no representation of $W_p$ as an empirical process for $p > 1$ 
(there is, however, a general dual expression of the transport cost). In spite of this, the technique of proof in \cite{dudley1969speed} proves
useful in our case, and the technique of using a sequence of coarser and coarser partitions is at the heart of many later results, notably in
the literature concerned with the problem of matching two independent samples in Euclidean space, see e.g. \cite{talagrand1992matching} or the
recent paper \cite{barthe2011combinatorial}.

\end{remark}

We now give a first example of application, under an assumption that the underlying metric space is of finite-dimensional type in some sense.
More precisely, we assume that there exist $k_E > 0$, $\alpha > 0$ such that

\begin{equation} \label{assumption_dimension_independent_case}
 N(E, \delta) \leq k_E (\text{Diam } E/\delta)^\alpha.
\end{equation}

Here, the parameter $\alpha$ plays the role of a dimension.

\begin{corollary} \label{thm_independent_case_finite_dim}
 Assume that $E$ satisfies (\ref{assumption_dimension_independent_case}), and that $\alpha > 2p$. With notations as earlier, the following holds :

\begin{equation*}
 \mathbb{E} [W_p (L_n, \mu)] \leq c \frac{\alpha}{\alpha - 2p} \, \text{Diam } E \, k_E^{1/\alpha} n^{-1/\alpha}
\end{equation*}

with $c \leq 64/3$.
\end{corollary}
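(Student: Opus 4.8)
The plan is to derive the corollary directly from Proposition \ref{main_prop_independent_case} by inserting the dimensional covering bound (\ref{assumption_dimension_independent_case}) and then choosing $t$ so as to balance the two terms. Write $D = \text{Diam } E$ for brevity. Since $X \subset E$ we have $N(X, \delta) \le N(E, \delta) \le k_E (D/\delta)^\alpha$, whence $N(X,\delta)^{1/2p} \le k_E^{1/2p} D^{\alpha/2p} \delta^{-\alpha/2p}$.

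First I would estimate the integral appearing in the proposition. Because $\alpha > 2p$, the exponent $-\alpha/2p$ is strictly less than $-1$, so
\[
\int_t^{\text{Diam } X /4} \delta^{-\alpha/2p}\, d\delta = \frac{t^{1-\alpha/2p} - (\text{Diam } X/4)^{1-\alpha/2p}}{\alpha/2p - 1} \le \frac{2p}{\alpha - 2p}\, t^{1 - \alpha/2p},
\]
where the last inequality just drops the (negative) contribution of the upper limit. Substituting this into the conclusion of Proposition \ref{main_prop_independent_case} gives
\[
\mathbb{E}[W_p(L_n,\mu)] \le c\left( t + \frac{2p}{\alpha - 2p}\, k_E^{1/2p} D^{\alpha/2p}\, n^{-1/2p}\, t^{1-\alpha/2p}\right).
\]

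The key step is the choice of $t$. I would take $t = D\, k_E^{1/\alpha}\, n^{-1/\alpha}$. A direct bookkeeping of the exponents of $n$, $D$ and $k_E$ in the second term then shows they collapse to $-1/\alpha$, $1$ and $1/\alpha$ respectively, so that this term equals $\frac{2p}{\alpha-2p}\, D k_E^{1/\alpha} n^{-1/\alpha}$, exactly of the same form as the first term $t$. Adding the two contributions produces the factor $1 + \frac{2p}{\alpha-2p} = \frac{\alpha}{\alpha-2p}$, which is precisely the announced bound.

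The only point requiring care is the admissibility of this choice, since the proposition is applied with $t$ below the diameter of the support. One must therefore treat separately the small-$n$ regime in which $t = D k_E^{1/\alpha} n^{-1/\alpha}$ exceeds $\text{Diam } X /4$; there the integral term is vacuous and one instead invokes the trivial estimate $W_p(L_n,\mu) \le \text{Diam}(\supp \mu) \le D$, which in that regime satisfies $D \le 4\, D k_E^{1/\alpha} n^{-1/\alpha}$ and is thus absorbed into the constant. Apart from this bookkeeping, the argument is a one-line optimization, and I expect no genuine obstacle beyond verifying the algebra of the exponents.
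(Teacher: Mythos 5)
Your proposal is correct and is essentially the paper's own proof, which is stated in one line as ``use Proposition \ref{main_prop_independent_case} along with (\ref{assumption_dimension_independent_case}) and optimize in $t$'': your balancing choice $t = \text{Diam } E \, k_E^{1/\alpha} n^{-1/\alpha}$ is exactly the optimizer of $t \mapsto t + \frac{2p}{\alpha-2p} k_E^{1/2p} (\text{Diam } E)^{\alpha/2p} n^{-1/2p} t^{1-\alpha/2p}$ and yields the factor $1 + \frac{2p}{\alpha-2p} = \frac{\alpha}{\alpha-2p}$. Your additional treatment of the small-$n$ regime (where $t$ exceeds a quarter of the diameter and the integral in the Proposition becomes vacuous, so one falls back on the trivial diameter bound) is sound bookkeeping that the paper leaves implicit, and it is cleanest if you simply apply the Proposition with $X = E$, which removes any discrepancy between $\text{Diam } X$ and $\text{Diam } E$.
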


\begin{remark}
 In the case of measures supported in $\R^d$, this result is neither new nor fully optimal. For a sharp statement in this case, the reader may refer to \cite{barthe2011combinatorial}
and references therein. However, we recover at least the exponent of $n^{-1/ d}$
which is sharp for $d \geq 3$, see \cite{barthe2011combinatorial} for a discussion.
And on the other hand, Corollary \ref{thm_independent_case_finite_dim} extends to more general metric spaces of finite-dimensional type, for example manifolds.
\end{remark}

As opposed to Corollary \ref{thm_independent_case_finite_dim}, our next result is set in an infinite-dimensional framework.

\subsection{An application to Gaussian r.v.s in Banach spaces} \label{subsection_gaussian}

We apply the results above to the case where $E$ is a separable Banach space with norm $\|.\|$, 
and $\mu$ is a centered Gaussian random variable with values in $E$, meaning that the image of
$\mu$ by every continuous linear functional $f \in E^*$ is a centered Gaussian variable in $\R$.
The couple $(E, \mu)$ is called a (separable) Gaussian Banach space.

Let $X$ be a $E$-valued r.v. with law $\mu$, and define the weak variance of $\mu$ as 

\begin{equation*}
 \sigma = \sup_{f \in E^*, \, |f| \leq 1} \left( \mathbb{E} f^2(X) \right)^{1/2}.
\end{equation*}

The small ball function of a Gaussian Banach space $(E, \mu)$ is the function

\begin{equation*}
 \psi(t) = - \log \mu(B(0, t)).
\end{equation*}

We can associate to the couple $(E, \mu)$ their Cameron-Martin Hilbert space $H \subset E$, see e.g. \cite{ledoux1996isoperimetry}
for a reference. It is known that the small ball function has deep links with the covering numbers of the unit ball of $H$,
see e.g. Kuelbs-Li \cite{kuelbs1993metric} and Li-Linde \cite{li1999approximation}, as well as with the 
approximation of $\mu$ by measures with finite support in Wasserstein distance (the quantization or optimal quantization problem), 
see Fehringer's Ph.D. thesis \cite{fehringer2001kodierung}, Dereich-Fehringer-Matoussi-Scheutzow \cite{dereich2003link}, Graf-Luschgy-Pag\`{e}s
\cite{graf2003functional}.

We make the following assumptions on the small ball function :

\begin{enumerate}
 \item \label{cond_small_ball_1} there exists $\kappa > 1$ such that $\psi(t) \leq \kappa \psi(2t)$ for $0 < t \leq t_0$,
 \item \label{cond_small_ball_2} for all $\varepsilon > 0$, $n^{-\varepsilon} = o( \psi^{-1} ( \log n) )$.
\end{enumerate}

Assumption (\ref{cond_small_ball_2}) implies that the Gaussian measure is genuinely infinite dimensional : indeed, in the case when
$\text{dim } K < + \infty$, the measure is supported in a finite-dimensional Banach space, and in this case the small ball function behaves as $\log t$.

\begin{theorem} \label{theorem_gaussian_vector}

Let $(E, \mu)$ be a Gaussian Banach space with weak variance $\sigma$ and small ball function $\psi$. Assume that Assumptions
(\ref{cond_small_ball_1}) and (\ref{cond_small_ball_2}) hold.

Then there exists a universal constant $c$ such that for all

\begin{equation*}
n \geq (6 + \kappa) (\log 2 \vee \psi(1) \vee \psi(t_0/2) \vee 1/ \sigma^2), 
\end{equation*}

the following holds :

\begin{equation} \label{result_thm_gaussian_1}
 \mathbb{E}(W_2( L_n, \mu)) \leq c \left[  \psi^{-1}( \frac{1}{6 + \kappa} \log n) + \sigma n^{-1/[4(6 + \kappa)]} \right].
\end{equation}

In particular, there is a $C = C(\mu)$ such that

\begin{equation} \label{result_thm_gaussian_2}
 \mathbb{E}(W_2 (L_n, \mu)) \leq C \psi^{-1}( \log n).
\end{equation}

Moreover, for $\lambda > 0$,

\begin{equation} \label{result_thm_gaussian_3}
 W_2(L_n, \mu) \leq (C + \lambda) \psi^{-1}( \log n) \text{ with probability } 1 - \exp -n \psi^{-1}( \log n) \frac{\lambda^2}{2 \sigma^2}.
\end{equation}

\end{theorem}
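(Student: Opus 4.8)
The plan is to use Proposition \ref{main_prop_independent_case} as the engine, but since a Gaussian measure on an infinite-dimensional Banach space is not compactly supported, I cannot apply it directly. Instead I would first restrict $\mu$ to a large ball $B(0,R)$ on which almost all the mass sits, control the tail contribution separately using Gaussian concentration, and then bound the covering numbers of that ball via the small ball function. Concretely, I would split
\begin{equation*}
\mathbb{E}(W_2(L_n,\mu)) \leq \mathbb{E}(W_2(L_n, L_n^R)) + \mathbb{E}(W_2(L_n^R, \mu^R)) + W_2(\mu^R,\mu),
\end{equation*}
where $\mu^R$ is $\mu$ conditioned to $B(0,R)$ and $L_n^R$ the corresponding restricted empirical measure, so that the middle term is amenable to the proposition on the bounded set $X = B(0,R)$, while the first and third terms are transport costs associated with the event that a sample point escapes $B(0,R)$, governed by the Gaussian tail $\mu(\|X\|>R) \leq e^{-(R-\sigma\sqrt{2\log 2})^2/2\sigma^2}$ (Borell's inequality). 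Choosing $R$ of order $\sigma\sqrt{\log n}$ makes these remainder terms of the correct order $\sigma n^{-\eta}$ for a suitable exponent $\eta$.

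The key technical input is the covering-number estimate for the ball $B(0,R)$ of a Gaussian Banach space. Here I would invoke the link between the small ball function $\psi$ and the metric entropy of the Cameron-Martin unit ball, in the form established by Kuelbs-Li and Li-Linde \cite{kuelbs1993metric, li1999approximation}: up to universal constants, $\log N(K, \delta)$ and $\psi(\delta)$ are comparable, and covering $B(0,R)$ in $E$ reduces (again using concentration, which pushes most of the mass of $\mu$ near a dilate of the Cameron-Martin ball $K$) to covering a dilate of $K$. Feeding a bound of the form $N(X,\delta) \leq \exp(c\,\psi(\delta))$ into the integral $\int_t^{d/4} N(X,\delta)^{1/4}\,d\delta$ of Proposition \ref{main_prop_independent_case} (with $p=2$, so the exponent is $1/2p = 1/4$), and then applying the doubling assumption (\ref{cond_small_ball_1}) $\psi(t)\leq\kappa\psi(2t)$ to evaluate the resulting integral by a geometric-type comparison, should produce the dominant term $\psi^{-1}(\frac{1}{6+\kappa}\log n)$. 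The role of condition (\ref{cond_small_ball_1}) is precisely to let me sum the dyadic contributions $\sum_k N(X,2^{-k})^{1/4}2^{-k}$ against a convergent geometric series; the choice of truncation level $t = \psi^{-1}(\frac{1}{6+\kappa}\log n)$ is what balances the entropy integral against the prefactor $n^{-1/2p}=n^{-1/4}$.

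The passage from (\ref{result_thm_gaussian_1}) to (\ref{result_thm_gaussian_2}) is then a matter of absorbing the second term into the first: assumption (\ref{cond_small_ball_2}), which says $\psi^{-1}(\log n)$ decays slower than any polynomial $n^{-\varepsilon}$, guarantees that $\sigma n^{-1/[4(6+\kappa)]}$ is negligible compared to $\psi^{-1}(\frac{1}{6+\kappa}\log n)$, and a further application of the doubling property (\ref{cond_small_ball_1}) lets me replace $\psi^{-1}(\frac{1}{6+\kappa}\log n)$ by a constant multiple of $\psi^{-1}(\log n)$, yielding (\ref{result_thm_gaussian_2}) with $C=C(\mu)$. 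Finally, the high-probability statement (\ref{result_thm_gaussian_3}) comes from a separate measure-concentration argument rather than from the mean bound: since $\nu \mapsto W_2(\nu,\mu)$ is a $1$-Lipschitz functional of the sample with respect to a suitable product metric, I would apply Gaussian concentration of measure to the map $(X_1,\dots,X_n)\mapsto W_2(L_n,\mu)$, obtaining a sub-Gaussian deviation bound with variance proxy scaling like $\sigma^2/(n\,\psi^{-1}(\log n))$, which matches the stated exponent.

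I expect the main obstacle to be the covering-number step, specifically making the comparison between $\log N(B(0,R),\delta)$ and $\psi(\delta)$ fully explicit with universal constants, while keeping track of how the truncation radius $R$ enters: the Kuelbs-Li type estimates are typically stated asymptotically or up to unspecified constants, so the bulk of the careful work lies in converting them into the effective, non-asymptotic form needed to feed Proposition \ref{main_prop_independent_case} and to justify the precise constant $6+\kappa$ appearing in the hypotheses.
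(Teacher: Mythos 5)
Your overall architecture (truncate $\mu$ to a bounded set, apply Proposition \ref{main_prop_independent_case} there, control the truncation error by Gaussian concentration, deduce (\ref{result_thm_gaussian_2}) by comparing the two terms, and get (\ref{result_thm_gaussian_3}) from a separate concentration argument) is the same as the paper's. But there is a genuine gap at the central step: you truncate to a norm ball $X = B(0,R)$, and in an infinite-dimensional Banach space a norm ball is \emph{not totally bounded}, so $N(B(0,R),\delta) = +\infty$ for all $\delta$ small relative to $R$ (Riesz's lemma); since a nondegenerate Gaussian measure has support equal to all of $E$ (or at least to the closure of its Cameron--Martin space, which is not totally bounded), the restricted measure $\mu^R$ has no totally bounded support either, and Proposition \ref{main_prop_independent_case} gives nothing for any choice of $R$. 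Your proposed remedy --- that covering $B(0,R)$ ``reduces to covering a dilate of $K$'' because most of the mass lies near $\lambda K$ --- cannot be made into a covering-number statement: $\lambda K$ is compact while $B(0,R)$ is not, so no enlargement $\lambda K + \varepsilon B$ ever contains $B(0,R)$. The fix is that the truncation set itself must be $X = \lambda K + \varepsilon B$. This is exactly what the paper does: it restricts $\mu$ to $\mu'$ on $\lambda K + \varepsilon B$, bounds the truncation error by the transport inequality $W_2(\mu,\mu') \leq \sqrt{-2\sigma^2\log\mu(\lambda K + \varepsilon B)}$ (Lemma \ref{lemma_transport_ineq_gaussian}) combined with the Gaussian isoperimetric inequality $\mu(\lambda K + \varepsilon B) \geq \Phi\left(\lambda + \Phi^{-1}(\mu(\varepsilon B))\right)$, and bounds the covering numbers by $N(\lambda K + \varepsilon B, t) \leq N(\lambda K, t-\varepsilon) \leq e^{\lambda^2/2 + \psi((t-\varepsilon)/2)}$ (Lemma 1 of Kuelbs--Li, which is already non-asymptotic and explicit, contrary to your concern). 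Note also that even with a legitimate truncation set, a Borell-type tail \emph{probability} bound is not enough: you need to bound the $W_2$ \emph{cost} of the truncation, which is what the $\bt_2$ inequality supplies.

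A second, related omission is that your optimization involves only the scale $t$, whereas the correct balance is a joint optimization in $t$ and the dilation parameter $\lambda$: the entropy integral carries the factor $e^{\lambda^2/8 + (\kappa/4)\psi(t/2) - (1/4)\log n}$ while the truncation error behaves like $\sigma e^{-(\lambda-\sqrt{2\psi(t/2)})^2/4}$, and the constant $6+\kappa$ arises precisely from choosing $t = 2\psi^{-1}(a\log n)$, $\lambda = 2\sqrt{2a\log n}$ and requiring $a(1+\kappa/4) - 1/4 < 0$, i.e. it cannot be recovered from the entropy integral alone as you suggest. Finally, in (\ref{result_thm_gaussian_3}) your ``variance proxy $\sigma^2/(n\,\psi^{-1}(\log n))$'' is unsupported: the map $(x_1,\dots,x_n)\mapsto W_2(L_n^x,\mu)$ is $n^{-1/2}$-Lipschitz for the $\ell^2$ product metric, so tensorization of $\bt_2$ (Corollary \ref{corollary_transport_ineq_gaussian}) gives $\pr\left(W_2(L_n,\mu) \geq \mathbb{E}[W_2(L_n,\mu)] + t\right) \leq e^{-nt^2/(2\sigma^2)}$, with variance proxy $\sigma^2/n$; the factor $\psi^{-1}(\log n)$ enters only through the choice of deviation level $t = \lambda\psi^{-1}(\log n)$, not through the Lipschitz constant.
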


\begin{remark}
 Note that the choice of $6 + \kappa$ is not particularly sharp and may likely be improved.
\end{remark}

In order to underline the interest of the result above, we introduce some definitions from optimal quantization.
For $n \geq 1$ and $1 \leq r < + \infty$, define the optimal quantization error at rate $n$ as

\begin{equation*}
 \delta_{n, r} (\mu) = \inf_{ \nu \in \Pro_n} W_r( \mu, \nu)
\end{equation*}

where the infimum runs on the set $\Pro_n$ of probability measures with finite support of cardinal bounded by $n$.
Under some natural assumptions, the upper bound of (\ref{result_thm_gaussian_3}) is matched by a lower bound 
for the quantization error. 
Theorem 3.1 in \cite{dereich2003link} states the following : if for every $0 < \zeta < 1$, 

\begin{equation*}
\mu( (1- \zeta) \varepsilon B ) = o (\mu ( \varepsilon B) ) \text{ as } \varepsilon \rightarrow 0, 
\end{equation*}

then 

\begin{equation*}
 \delta_{n, r} \gtrsim \psi^{-1} ( \log n )
\end{equation*}

(where $a_n \gtrsim b_n$ means $\liminf a_n / b_n \geq 1$).

In the terminology of quantization, Theorem \ref{theorem_gaussian_vector} states that the empirical measure is a rate-optimal quantizer with high probability (under
some assumptions on the small ball function). This is of practical interest, since obtaining the empirical measure is only as difficult as simulating an instance
of the Gaussian vector, and one avoids dealing with computation of appropriate weights in the approximating discrete measure.

We leave aside the question of determining the sharp asymptotics for the average error $\mathbb{E}(W_2(L_n, \mu))$, that is of
finding $c$ such that $\mathbb{E} (W_2 (L_n, \mu)) \sim c \psi^{-1}( \log n)$. Let us underline that the corresponding question for
quantizers is tackled for example in \cite{luschgy2004sharp}.

\subsection{The case of Markov chains} \label{subsection_markov_chains}

We wish to extend the control of the speed of convergence to weakly dependent sequences, such as rapidly-mixing Markov chains.
There is a natural incentive to consider this question : there are cases when one does not know hom to sample from a
given measure $\pi$, but a Markov chain with stationary measure $\pi$ is nevertheless available for simulation. This is the basic
set-up of the Markov Chain Monte Carlo framework, and a very frequent situation, even in finite dimension.

When looking at the proof of Proposition \ref{main_prop_independent_case}, it is apparent that 
the main ingredient missing in the dependent case is the argument following
(\ref{eq_bound_empirical}), i.e. that whenever $A \subset X$ is measurable, $n L_n(A)$ follows a binomial law with parameters $n$ and $\mu(A)$, 
and this must be remedied in some way.
It is natural to look for some type of quantitative ergodicity property of the chain, expressing almost-independence of $X_i$ and $X_j$ in the long range
($|i-j|$ large). 

We will consider decay-of-variance inequalities of the following form :

\begin{equation} \label{decay_of_variance}
 \text{Var}_\pi P^n f \leq C \lambda^n \text{Var}_\pi f.
\end{equation}

In the reversible case, a bound of the type of (\ref{decay_of_variance}) is ensured by Poincar\'e or spectral gap inequalities. 
We recall one possible definition in the discrete-time Markov chain setting.

\begin{definition}

Let $P$ be a Markov kernel with \emph{reversible} measure $\pi \in \Pro(E)$. We say that a Poincar\'e inequality with constant $C_P > 0$ holds if

\begin{equation} \label{def_poincare_discrete}
 \text{Var}_\pi f \leq C_P \int f (I - P^2)f d \pi
\end{equation}

for all $f \in L^2(\pi)$.

If (\ref{def_poincare_discrete}) holds, we have

\begin{equation*}
 \text{Var}_\pi P^n f \leq \lambda^n \text{Var}_\pi f
\end{equation*}

with $\lambda = (C_P-1)/C_P$.
\end{definition}

More generally, one may assume that we have a control of the decay of the variance in the following form :

\begin{equation} \label{decay_weak}
 \text{Var}_\pi P^n f \leq C \lambda^n \| f - \int f d \pi \|_{L^p}.
\end{equation}

As soon as $p > 2$, these inequalities are weaker than (\ref{decay_of_variance}). Our proof would be easily adaptable to this weaker decay-of-variance setting. 
We do not provide a complete statement of this claim.

For a discussion of the links between Poincar\'e inequality and other notions of weak dependence (e.g. mixing coefficients), see the recent paper \cite{cattiaux2011central}.

For the next two theorems, we make the following dimension assumption on $E$ : there exists $k_E > 0$ and $\alpha > 0$ such that for all $X \subset E$ with finite diameter,

\begin{equation} \label{dimension_assumption_markov}
N(X, \delta) \leq k_E (\text{Diam } X /\delta)^\alpha.
\end{equation}

The following theorem is the analogue of Corollary \ref{thm_independent_case_finite_dim} under the assumption that the Markov chain satisfies a decay-of-variance inequality.

\begin{theorem} \label{thm_markov}
Assume that $E$ has finite diameter $d > 0$ and (\ref{dimension_assumption_markov}) holds.
Let $\pi \in \Pro(E)$, and let $(X_i)_{i \geq 0}$ be a $E$-valued Markov chain with initial law $\nu$ such that $\pi$ is its unique invariant probability.
Assume also that (\ref{decay_of_variance}) holds for some $C > 0$ and $\lambda < 1$.

Then if $2p > \alpha(1 + 1/r)$ and $L_n$ denotes the occupation measure $1/n \sum_{i = 1}^n \delta_{X_i}$, the following holds :

\begin{equation*}
 \mathbb{E}_\nu \left[ W_p(L_n, \pi) \right] \leq c \frac{\alpha(1 + 1/r)}{\alpha(1 + 1/r) - 2p} k_E^{1/ \alpha} d \left( \frac{ C \| \frac{d \nu}{ d \pi} \|_{r}}{(1 - \lambda)n} \right)^{1/[\alpha(1 + 1/r)]}
\end{equation*}

for some universal constant $c \leq 64/3$.

\end{theorem}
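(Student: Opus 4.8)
The theorem is a Markov-chain analogue of Corollary~\ref{thm_independent_case_finite_dim}, so the natural strategy is to retrace the proof of Proposition~\ref{main_prop_independent_case} and locate the one place where i.i.d.\ structure is used — namely, as the authors point out, the step after~(\ref{eq_bound_empirical}) where $nL_n(A) \sim \mathrm{Binomial}(n, \mu(A))$ is invoked to control $\mathbb{E}|L_n(A) - \mu(A)|$. In the independent case one has a variance bound $\mathrm{Var}(L_n(A)) = \mu(A)(1-\mu(A))/n \leq \mu(A)/n$; the whole proof rests on being able to bound, for each set $A$ in a partition, the mean absolute deviation $\mathbb{E}_\nu|L_n(A) - \pi(A)|$ by (roughly) $n^{-1/2}\pi(A)^{1/2}$. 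The entire task is therefore to reproduce this square-root-in-$n$ deviation bound under the decay-of-variance hypothesis~(\ref{decay_of_variance}), and then feed the result into the already-established covering-number machinery of the main proposition.

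\textbf{Step 1: a deviation estimate for occupation measures.} For a measurable $A$, apply~(\ref{decay_of_variance}) to the indicator $f = \mathbf{1}_A$. I would expand
\begin{equation*}
\mathbb{E}_\pi\!\left[ (L_n(A) - \pi(A))^2 \right] = \frac{1}{n^2} \sum_{i,j=1}^n \mathrm{Cov}_\pi(\mathbf{1}_A(X_i), \mathbf{1}_A(X_j)),
\end{equation*}
and bound each covariance using $\mathrm{Cov}_\pi(\mathbf{1}_A(X_i), \mathbf{1}_A(X_j)) \leq \mathrm{Var}_\pi P^{|i-j|}\mathbf{1}_A \leq C\lambda^{|i-j|}\mathrm{Var}_\pi \mathbf{1}_A \leq C\lambda^{|i-j|}\pi(A)$. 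Summing the geometric series in $|i-j|$ gives a variance bound of order $\frac{C}{(1-\lambda)}\,\frac{\pi(A)}{n}$, from which Cauchy--Schwarz yields
\begin{equation*}
\mathbb{E}_\pi |L_n(A) - \pi(A)| \leq \left( \frac{C}{(1-\lambda)n} \right)^{1/2} \pi(A)^{1/2}.
\end{equation*}
This is the stationary-start analogue of the binomial bound, with the extra factor $\bigl(C/(1-\lambda)\bigr)^{1/2}$.

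\textbf{Step 2: the change of initial law.} The theorem is stated under an arbitrary initial law $\nu$, not the stationary $\pi$, which accounts for the factor $\|\tfrac{d\nu}{d\pi}\|_r$. I would pass from $\mathbb{E}_\nu$ to $\mathbb{E}_\pi$ by a Radon--Nikodym/Hölder argument on the path space: writing the law of the trajectory under $\nu$ in terms of its law under $\pi$ via the density $\tfrac{d\nu}{d\pi}(X_0)$, an application of Hölder's inequality with conjugate exponents $(r, r')$, $1/r + 1/r' = 1$, converts $\mathbb{E}_\nu|L_n(A)-\pi(A)|$ into $\|\tfrac{d\nu}{d\pi}\|_r$ times an $\mathbb{E}_\pi$-expectation of a power $r'$ of the deviation. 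This is why the exponents in the conclusion read $\alpha(1+1/r)$ rather than $2\alpha$: the $r'$-th moment replaces the second moment, effectively degrading the exponent $1/2$ in Step~1 to $1/(2r')= \tfrac12(1 - 1/r)$ after accounting for how it threads through the partition-sum bound, and one then re-optimizes the covering integral exactly as in Proposition~\ref{main_prop_independent_case}.

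\textbf{Main obstacle.} The genuinely delicate part is Step~2, keeping the bookkeeping of moments and Hölder exponents consistent so that the final exponent comes out as $\alpha(1+1/r)$ and the condition $2p > \alpha(1+1/r)$ emerges as the convergence criterion for the covering-number integral $\int_t^{d/4} N(X,\delta)^{\,\cdot}\,d\delta$. Concretely, after Step~1 and Step~2 the per-set bound is of the form $(\mathrm{const})\,\pi(A)^{\beta}$ for an exponent $\beta$ that is no longer $1/2$ but $1/(2r')$; summing this over a partition by coarser-and-coarser grids (as in the Dudley-type argument of the main proposition) and using~(\ref{dimension_assumption_markov}) produces a series that converges precisely when $2p > \alpha(1+1/r)$, and whose sum yields the prefactor $\tfrac{\alpha(1+1/r)}{\alpha(1+1/r)-2p}$. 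Once the deviation estimate with the correct exponent is in hand, the remainder is an essentially mechanical substitution into the proof of Corollary~\ref{thm_independent_case_finite_dim}, with $n$ replaced throughout by the effective sample size $(1-\lambda)n/(C\|\tfrac{d\nu}{d\pi}\|_r)$; I expect no further difficulty there.
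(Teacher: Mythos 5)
Your overall architecture — a per-set deviation bound for the occupation measure, fed into the partition/covering machinery of Proposition \ref{main_prop_independent_case} at the step following (\ref{eq_bound_empirical}), then optimization in $t$ — is exactly the paper's, and your Step 1 is essentially the paper's computation (one cosmetic fix: $\mathrm{Cov}_\pi(\mathbf{1}_A(X_i),\mathbf{1}_A(X_j))$ is bounded via Cauchy--Schwarz by $\sqrt{\mathrm{Var}_\pi \mathbf{1}_A}\,\sqrt{\mathrm{Var}_\pi P^{|i-j|}\mathbf{1}_A}\leq \sqrt{C}\,\lambda^{|i-j|/2}\pi(A)$, not by $\mathrm{Var}_\pi P^{|i-j|}\mathbf{1}_A$ itself; the geometric series argument survives). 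The genuine gap is in Step 2. With your ordering — change measure on the whole deviation first — Hölder gives $\mathbb{E}_\nu|L_n(A)-\pi(A)|\leq \|\tfrac{d\nu}{d\pi}\|_r\,\bigl(\mathbb{E}_\pi|L_n(A)-\pi(A)|^{r'}\bigr)^{1/r'}$, and the only way your Step 1 controls the right-hand side is Jensen, $\bigl(\mathbb{E}_\pi|\cdot|^{r'}\bigr)^{1/r'}\leq\bigl(\mathbb{E}_\pi|\cdot|^{2}\bigr)^{1/2}$ (requiring $r\geq 2$), which yields the per-set bound $\|\tfrac{d\nu}{d\pi}\|_r\,\bigl(C/((1-\lambda)n)\bigr)^{1/2}\pi(A)^{1/2}$: density norm to the \emph{first} power, and $\pi(A)$ to the power $1/2$, not $\tfrac12(1-1/r)$. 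Your assertion that the $r'$-th moment ``degrades'' the exponent to $1/(2r')$ has no mechanism behind it; no estimate of that $r'$-th moment can simultaneously produce $n^{-1/2}$, $\pi(A)^{(1-1/r)/2}$ and $\|\tfrac{d\nu}{d\pi}\|_r^{1/2}$, because applying Hölder once, outside everything, forces the full power of the density norm. Threading your actual per-set bound through the partition sum gives $\sum_l\pi(X_{j,l})^{1/2}\leq m(j)^{1/2}$, hence the integral $\int_t^{d/4}N(X,\delta)^{1/2p}\,d\delta$, the threshold $\alpha$ versus $2p$, and a final rate $\bigl(C\|\tfrac{d\nu}{d\pi}\|_r^2/((1-\lambda)n)\bigr)^{1/\alpha}$ — a different (incomparable) statement: better in $n$, strictly worse in $\|\tfrac{d\nu}{d\pi}\|_r$, under a different condition on $\alpha,p,r$. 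It does not yield the theorem as stated.

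The idea you are missing is the paper's ordering: Cauchy--Schwarz is applied \emph{first}, under $\mathbb{E}_\nu$, writing $\mathbb{E}_\nu|(L_n-\pi)(A)|\leq \tfrac1n\bigl(\sum_{i,j}\mathbb{E}_\nu[f_A(X_i)f_A(X_j)]\bigr)^{1/2}$ with $f_A=\mathbf{1}_A-\pi(A)$, and only then is the change of initial law performed on each term $\mathbb{E}_\nu[f_A(X_i)f_A(X_j)]$, via a three-exponent Hölder inequality $1/\tilde p+1/\tilde q+1/r=1$ with $\tilde p=2$. This gives $\mathbb{E}_\nu[f_A(X_i)f_A(X_j)]\leq \|\tfrac{d\nu}{d\pi}\|_r\,\|P^{j-i}f_A\|_{2}\,\|f_A\|_{\tilde q}\leq 4C\lambda^{j-i}\|\tfrac{d\nu}{d\pi}\|_r\,\pi(A)^{1-1/r}$ \emph{inside} the square root, whence the per-set bound (\ref{markov_chain_occupation_time}) with $\|\tfrac{d\nu}{d\pi}\|_r^{1/2}$ and $\pi(A)^{1/2-1/(2r)}$. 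It is precisely this square-rooted form that makes $m(j)^{(1+1/r)/2p}$ appear in the partition sum, hence the covering integral $\int_t^{d/4}N(X,\delta)^{(1+1/r)/2p}\,d\delta$ of (\ref{bound_markov_poincare}), the threshold $\alpha(1+1/r)$ versus $2p$, and the stated rate $\bigl(C\|\tfrac{d\nu}{d\pi}\|_r/((1-\lambda)n)\bigr)^{1/[\alpha(1+1/r)]}$ after optimizing in $t$. A side remark: the regime where this optimization is meaningful is $\alpha(1+1/r)>2p$ (covering integral divergent as $t\to 0$, prefactor $\tfrac{\alpha(1+1/r)}{\alpha(1+1/r)-2p}$ positive); you repeated the inequality as printed in the statement, but tying it, as you do, to ``convergence of the series'' is backwards — under the printed inequality the integral converges at $0$ and no optimization is needed.
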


The previous theorem has the drawback of assuming that the state space has finite diameter. This can be circumvented, for example by truncation arguments. 
Our next theorem is an extension to the unbounded case under some moment conditions on $\pi$. The statement and the proof involve more technicalities than Theorem \ref{thm_markov}, so we 
separate the two in spite of the obvious similarities.

\begin{theorem} \label{thm_markov_unbounded}

Assume that (\ref{dimension_assumption_markov}) holds.
Let $\pi \in \Pro(E)$, and let $(X_i)_{i \geq 0}$ be a $E$-valued Markov chain with initial law $\nu$ such that $\pi$ is its unique invariant probability.
Assume also that (\ref{decay_of_variance}) holds for some $C > 0$ and $\lambda < 1$.
Let $x_0 \in E$ and for all $\theta \geq 1$, denote $M_\theta = \int d(x_0, x)^\theta d \pi$.
Fix $r$ and $\zeta > 1$
and assume $2p > \alpha(1 + 1/r)(1 + 1/\zeta)$. 

There exist two numerical constant $C_1(p, r, \zeta)$ and $C_2(p, r, \zeta)$ only depending on $p$, $r$ and $\zeta$ such that whenever

\begin{equation*}
  \frac{ C \| \frac{d \nu}{ d \pi} \|_{r}}{(1 - \lambda)n} \leq C_1(p, r, \zeta),
\end{equation*}

the following holds :

\begin{equation*}
 \mathbb{E}_\nu \left[ W_p(L_n, \pi) \right] \leq C_1(p, r, \zeta) K(\zeta)  \left( \frac{ C \| \frac{d \nu}{ d \pi} \|_{r}}{(1 - \lambda)n} \right)^{1/[\alpha(1 + 1/r)(1 + 1/\zeta)]}
\end{equation*}

where 

\begin{equation*}
 K(\zeta) = \frac{m_\zeta}{m_p^{\zeta/p}} \vee \frac{m_{\zeta + 2p}}{m_p^{1 + \zeta/p}} \vee k_E^{1/2p(1+1/r)} \frac{2p}{\alpha(1 + 1/r)} m_p^{\alpha/(2p^2)(1+1/r)}.
\end{equation*}

\end{theorem}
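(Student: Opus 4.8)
The plan is to reduce Theorem \ref{thm_markov_unbounded} to the bounded estimate of Theorem \ref{thm_markov} by a truncation argument: I cut the state space at a radius $R$ around $x_0$, apply the bounded machinery inside the ball $B(x_0,R)$, and absorb the unbounded part into the moments $M_\theta$, optimising $R$ at the very end. Write $\rho_n := C\|\tfrac{d\nu}{d\pi}\|_r / [(1-\lambda)n]$ for the basic small parameter, so that Theorem \ref{thm_markov} reads, on a set of diameter $d$, $\mathbb{E}_\nu W_p(L_n,\pi) \lesssim \tfrac{\alpha(1+1/r)}{\alpha(1+1/r)-2p} k_E^{1/\alpha} d\,\rho_n^{1/[\alpha(1+1/r)]}$. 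First I revisit the proof of Proposition \ref{main_prop_independent_case}: its heart is a Dudley-type coupling, built from a nested sequence of partitions $(\mathcal Q_l)_l$ of the support with mesh $\Delta_l$ at level $l$, which delivers a bound of the form
\[
 W_p(L_n,\pi)\ \le\ \Delta_0 + \sum_{l\ge 1}\Delta_{l-1}\Big(\sum_{Q\in\mathcal Q_l}|L_n(Q)-\pi(Q)|\Big)^{1/p}.
\]
Independence enters only through the control of $\mathbb{E}|L_n(Q)-\pi(Q)|$; in the Markov setting this is replaced, exactly as in Theorem \ref{thm_markov}, by the decay-of-variance inequality (\ref{decay_of_variance}), which after a H\"older step against the density $\tfrac{d\nu}{d\pi}$ gives $\mathbb{E}_\nu|L_n(Q)-\pi(Q)|\lesssim \rho_n^{1/2}\,\pi(Q)^{(1-1/r)/2}$ for \emph{any} measurable $Q$. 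Since this holds for an arbitrary cell, it survives truncation, and I may keep the full chain intact rather than try to condition it to a ball.

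Next I set up the decomposition. Fix $R>0$ and partition $E$ into the ball $B_0=B(x_0,R)$ and the dyadic shells $S_k=B(x_0,2^{k+1}R)\setminus B(x_0,2^kR)$ for $k\ge 0$; each $S_k$ has diameter at most $2^{k+2}R$, and (\ref{dimension_assumption_markov}) bounds its covering numbers by $k_E(2^{k+2}R/\delta)^\alpha$. Refining $B_0$ and each shell by covering numbers and feeding this into the partition bound, the bulk $B_0$ contributes a term of order $R\,\rho_n^{1/[\alpha(1+1/r)]}$. This is precisely Theorem \ref{thm_markov} with effective diameter $\sim R$, and it is here that the factor $\tfrac{2p}{\alpha(1+1/r)}$ and the power of $k_E$ in the third term of $K(\zeta)$ are produced, the assumption $2p>\alpha(1+1/r)$ guaranteeing convergence of the geometric sum over scales.

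The tail is the genuinely new ingredient. The mass that $\pi$ places beyond radius $\varrho$ is controlled by Markov's inequality, $\pi(\{d(x_0,\cdot)>\varrho\})\le M_\zeta\,\varrho^{-\zeta}$, which accounts for the ratio $m_\zeta/m_p^{\zeta/p}$. On top of this, the shell deviations $\mathbb{E}_\nu|L_n(S_k)-\pi(S_k)|$ must be paid for at the $W_p^p$ scale $(2^kR)^p$; summing $\sum_k (2^kR)^p\big(\pi(S_k)+\text{deviation on }S_k\big)$ and using $\pi(S_k)\lesssim M_\theta\,(2^kR)^{-\theta}$ through (\ref{decay_of_variance}) brings in the higher moment $M_{\zeta+2p}$ (the $+2p$ being the square of the $W_p^p$-scale weight under the variance estimate), and hence the second ratio $m_{\zeta+2p}/m_p^{1+\zeta/p}$. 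Taking the $p$-th root, the geometric series in $k$ converges precisely because $\zeta>1$, and the net tail contribution decays as a negative power of $R$ carrying these moment ratios.

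Finally I optimise. The bulk contribution grows with $R$ while the tail contribution decays in $R$, so balancing the two powers fixes $R$ up to the moment normalisations and collapses both into a single power $\rho_n^{1/[\alpha(1+1/r)(1+1/\zeta)]}$, the extra factor $1+1/\zeta$ being exactly the price of having only the finite moments $M_\zeta,M_{\zeta+2p}$ at our disposal; this is also where the standing assumption $2p>\alpha(1+1/r)(1+1/\zeta)$ is needed. The smallness hypothesis $\rho_n\le C_1(p,r,\zeta)$ forces the optimal $R$ to exceed the baseline scale $M_p^{1/p}$, so that the moment ratios in $K(\zeta)$ are legitimate and $B_0$ really captures the bulk of the mass, while the numerical constants $C_1,C_2$ merely bookkeep the geometric sums and the optimisation. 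I expect the main obstacle to be the tail transport cost of the \emph{occupation} measure: because $L_n(S_k)$ is a sum of dependent indicators, every shell estimate must be routed through the decay-of-variance inequality uniformly in $k$ rather than through a binomial bound, and then combined with the moment control and summed with explicit constants — keeping track of these constants through both the shell summation and the $R$-optimisation is the delicate part.
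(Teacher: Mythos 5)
Your overall strategy is the same as the paper's: cut $E$ into a ball around $x_0$ plus a sequence of shells, run the nested-partition/decay-of-variance machinery on the bounded piece, control the tail by Chebyshev-type moment bounds, and balance the two contributions over the truncation radius (the paper's rings $C_i=B(x_0,d_i)\setminus B(x_0,d_{i-1})$ with $d_i=\rho^iM_p^{1/p}$ play the role of your dyadic shells, and your balancing does reproduce the exponent $1/[\alpha(1+1/r)(1+1/\zeta)]$). The genuine gap is in your treatment of the tail. You propose to pay for the shell deviations $\mathbb{E}_\nu|L_n(S_k)-\pi(S_k)|$ at scale $(2^kR)^p$ \emph{via the decay-of-variance inequality}, i.e.\ via the bound (\ref{markov_chain_occupation_time}), which gives $\mathbb{E}_\nu|L_n(S_k)-\pi(S_k)|\lesssim \rho_n^{1/2}\,\pi(S_k)^{\frac12(1-1/r)}$ with $\rho_n=C\|\tfrac{d\nu}{d\pi}\|_r/[(1-\lambda)n]$. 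The exponent $\tfrac12(1-\tfrac1r)<1$ on $\pi(S_k)$ is fatal: with the moments actually available in $K(\zeta)$, Chebyshev gives $\pi(S_k)\lesssim M_{\zeta+2p}(2^kR)^{-(\zeta+2p)}$, and the weighted sum $\sum_k (2^kR)^p\pi(S_k)^{\frac12(1-1/r)}$ converges only when $(\zeta+2p)\tfrac{r-1}{2r}>p$, i.e.\ $\zeta>2p/(r-1)$ (with moments of order $(\zeta+2)p$, the condition is $\zeta>2/(r-1)$). This is strictly stronger than the hypothesis $\zeta>1$, and for $r$ close to $1$ it is arbitrarily far from it, so the proof as designed does not yield the theorem. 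Your heuristic that the ``$+2p$'' in $m_{\zeta+2p}$ is ``the square of the $W_p^p$-scale weight under the variance estimate'' is precisely where this goes wrong: that accounting is only consistent if one pretends $r=\infty$.

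The paper avoids the issue by never invoking the variance-decay inequality on the outer rings. It uses the convexity inequality $W_p^p(L_n,\pi)\le\sum_i m_i\,W_p^p(m_i^{-1}L_n^i,\pi)$ with $m_i=L_n(C_i)$, the crude bound $W_p(m_i^{-1}L_n^i,\pi)\le M_p^{1/p}+d_i$ (route all mass through $x_0$), and then only the \emph{expected masses}: $\mathbb{E}[m_i]$ is identified with $\pi(C_i)$ (exact under a stationary start; the paper silently makes this identification) and bounded by plain Chebyshev, so $\pi(C_i)$ enters with power one, moments of order $(\zeta+2)p$ suffice, and the margin left after beating the transport weight $d_i^p$ is exactly the $\rho^{-\zeta}$ that gets balanced against the bulk. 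The inequality (\ref{markov_chain_occupation_time}) is used only inside the bulk ball $C_1$ --- on the cells of its partition and once on $C_1$ itself to control $\mathbb{E}|L_n(C_1)-1|$ --- which is also how the paper resolves the mass-defect problem that your ``keep the full chain and partition globally'' formulation leaves implicit: Lemma \ref{lemma_induction_cost} compares measures of equal total mass, so restricting to $C_1$ forces the correction terms in $|m_1-1|$ and $\pi(C_1^c)$ appearing in the paper's Step 3. To repair your argument, replace the variance-routed shell estimates by this expected-mass/crude-transport bound; the rest of your outline (bulk estimate, balancing, and the origin of the smallness condition on $\rho_n$, which in the paper comes from the constraint $\rho\ge 2$) then matches the paper's proof.
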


\section{Proofs in the independent case}

\begin{lemma} \label{lemma_partitions}

 Let $X \subset E$, $s > 0$ and $u, v \in \mathbb{N}$ with $u < v$. Suppose that $N(X, 4^{-v} s) < + \infty$. For $u \leq j \leq v$, there exist integers 

\begin{equation}
 m(j) \leq N(X, 4^{-j} s)
\end{equation}

and non-empty subsets $X_{j, l}$ of $X$, $u \leq j \leq v$, $1 \leq l \leq m(j)$, such that the sets $X_{j, l}$ $1 \leq l \leq m(j)$ 
satisfy

\begin{enumerate}
 \item \label{cond_1} for each $j$, $(X_{j, l})_{1 \leq l \leq m(j)}$ is a partition of $X$,
 \item \label{cond_2} $\text{Diam } X_{j, l} \leq 4^{-j + 1} s$,
 \item \label{cond_3} for each $j > u$, for each $1 \leq l \leq m(j)$ there exists $1 \leq l' \leq m(j-1)$ such that $X_{j, l} \subset X_{j-1, l'}$.
\end{enumerate}
 
In other words, the sets $X_{j, l}$ form
a sequence of partitions of $X$ that get coarser as $j$ decreases (tiles at the scale $j-1$ are unions of tiles at the scale $j$).
\end{lemma}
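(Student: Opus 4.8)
The plan is to build the partitions by induction on the scale index $j$, moving from the coarsest level $j = u$ down to the finest level $j = v$, and at each level to obtain the partition by \emph{refining} a covering produced by the covering-number definition. Concretely, for each $j$ I would invoke $N(X, 4^{-j}s)$ to obtain points $x_1, \ldots, x_{m(j)}$ with $m(j) \leq N(X, 4^{-j}s)$ such that $X \subset \bigcup_i B(x_i, 4^{-j}s)$, then disjointify these balls in the usual greedy way: set $Y_{j,1} = B(x_1, 4^{-j}s) \cap X$ and $Y_{j,l} = \bigl( B(x_l, 4^{-j}s) \cap X \bigr) \setminus \bigcup_{l' < l} B(x_{l'}, 4^{-j}s)$. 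This yields a partition of $X$ into at most $m(j)$ pieces (discarding any that come out empty and relabelling), each of diameter at most $2 \cdot 4^{-j}s$, which is comfortably within the bound $4^{-j+1}s$ required by condition (\ref{cond_2}). This handles conditions (\ref{cond_1}) and (\ref{cond_2}) at a single scale.

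The genuinely delicate point is the nesting condition (\ref{cond_3}): the partitions obtained independently at each scale need not be nested. I would therefore not use the independent partitions directly but instead take the common refinement with the previous scale. That is, having built the partition $(X_{j-1, l'})_{l'}$ at scale $j-1$, I define the scale-$j$ partition by intersecting each raw disjointified ball $Y_{j,l}$ with the tile $X_{j-1, l'}$ containing it — or more cleanly, I replace $Y_{j,l}$ by its intersections $Y_{j,l} \cap X_{j-1, l'}$ over all $l'$, keep the nonempty ones, and relabel. Each resulting piece sits inside a single tile of the level above, giving (\ref{cond_3}) by construction. Diameters can only shrink under intersection, so (\ref{cond_2}) is preserved. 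The one thing to check is that this refinement does not blow up the count beyond $N(X, 4^{-j}s)$: this holds because each tile $X_{j-1,l'}$ has diameter at most $4^{-j+2}s$, and the finer balls at scale $j$ subdivide $X$; a direct argument is that the number of level-$j$ tiles is still controlled by the number of level-$j$ balls, since each nonempty refined piece is contained in a distinct level-$j$ ball (after the greedy disjointification each point of $X$ lies in exactly one $Y_{j,l}$, and refining by the coarser partition only splits these along boundaries that already respect the coarser tiles).

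The main obstacle I anticipate is precisely reconciling the count bound $m(j) \leq N(X, 4^{-j}s)$ with the simultaneous demand that the partitions nest. The cleanest way to guarantee both is to carry the nesting as an invariant through the induction rather than imposing it after the fact: at step $j$ I would intersect the freshly disjointified balls with the already-constructed coarser partition at level $j-1$, argue that each nonempty intersection lies in exactly one scale-$j$ ball $B(x_i, 4^{-j}s)$, and hence that the number of nonempty tiles is at most the number of balls, namely $N(X, 4^{-j}s)$. Condition (\ref{cond_2}) then follows since any tile is contained in a ball of radius $4^{-j}s$ hence has diameter at most $2 \cdot 4^{-j}s \leq 4^{-j+1}s$. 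The base case $j = u$ is just the plain disjointification with no refinement, and the finiteness hypothesis $N(X, 4^{-v}s) < +\infty$ ensures every covering number $N(X, 4^{-j}s)$ for $u \leq j \leq v$ is finite, so the construction never stalls.
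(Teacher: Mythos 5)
Your construction at a single scale (greedy disjointification of a covering) is fine, and you correctly identify the nesting condition (\ref{cond_3}) as the crux. But the way you resolve it contains a genuine gap: the counting claim in your refinement step is false. When you replace each disjointified piece $Y_{j,l}$ by its intersections $Y_{j,l} \cap X_{j-1,l'}$, a single $Y_{j,l}$ that straddles several coarser tiles gets split into several nonempty pieces, and \emph{all of these pieces lie in the same level-$j$ ball} $B(x_l, 4^{-j}s)$ --- not in distinct balls, as your argument asserts. There is also no reason the splitting should ``respect the coarser tiles'': the level-$(j-1)$ tiles were built from balls of radius $4^{-(j-1)}s$, which are unrelated to the level-$j$ balls. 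Consequently the common refinement can have up to $m(j)\,m(j-1)$ pieces, and iterating down from $u$ to $v$ the count can grow like a product of covering numbers, destroying the bound $m(j) \leq N(X, 4^{-j}s)$. This bound is not cosmetic: it is exactly what produces the factor $N(X,\delta)^{1/2p}$ in the entropy integral of Proposition \ref{main_prop_independent_case}, so the lemma is useless without it.

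The paper avoids this by running the induction in the opposite direction: it builds the \emph{finest} partition (level $v$) first by disjointification, and then obtains each coarser partition by \emph{merging} finer tiles --- the level-$k$ tile attached to the ball $B_{k,l}$ is the union of all level-$(k+1)$ tiles that meet $B_{k,l}$ and have not already been claimed by $B_{k,p}$, $p < l$. Nesting then holds by construction, and the count at level $k$ is at most the number of level-$k$ balls, since each coarse tile is indexed by a distinct ball. The price is paid in the diameter: a merged tile consists of finer tiles that merely \emph{intersect} the ball, so one only gets
\begin{equation*}
 d(x_{k,l}, y) \leq 4^{-k}s + \text{Diam}\, X_{k+1,l'} \leq 2 \cdot 4^{-k}s,
\end{equation*}
hence $\text{Diam}\, X_{k,l} \leq 4^{-k+1}s$ rather than the $2 \cdot 4^{-k}s$ your single-scale disjointification gives. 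This is precisely why the lemma is stated with the weaker bound $4^{-j+1}s$: the slack is there to absorb the cost of merging, which is the step that makes the cardinality bound and the nesting compatible. If you reorganize your induction to go from fine to coarse and merge rather than intersect, your argument goes through.
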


\begin{proof}
 We begin by picking a set of balls $B_{j, l} = B(x_{j,l}, 4^{-j}s)$ with $u \leq j \leq v$ and $1 \leq l \leq N(X, 4^{-j}s)$, such that
for all $j$, 

\begin{equation*}
X \subset \bigcup_{l = 1}^{N(X, 4^{-j}s)} B_{j, l}.
 \end{equation*}

Define $X_{v, 1} = B_{v, 1}$, and successively set $X_{v, l} = B_{v, l} \setminus X_{v, l-1}$. Discard the possible empty sets and relabel the existing sets accordingly.
We have obtained the finest partition, obviously satisfying conditions (\ref{cond_1})-(\ref{cond_2}).

Assume now that the sets $X_{j, l}$ have been built for $k+1 \leq j \leq v$. Set $X_{k, 1}$ to be the reunion of all $X_{k+1, l'}$ such that
$X_{k+1, l'} \cap B_{k, 1} \neq \emptyset$. Likewise, define by induction on $l$ the set $X_{k, l}$ as the reunion of all $X_{k+1, l'}$ such that
$X_{k+1, l'} \cap B_{k, l} \neq \emptyset$ and $X_{k+1, l'} \nsubseteq X_{k, p}$ for $1 \leq p < l$. 
Again, discard the possible empty sets and relabel the remaining tiles. It is readily checked that the sets obtained satisfy assumptions (\ref{cond_1}) and (\ref{cond_3}).
We check assumption (\ref{cond_2}) : let $x_{k,l}$ denote the center of $B_{k, l}$ and let $y \in X_{k+1, l'} \subset X_{k,l}$. We have

\begin{equation*}
 d(x_{k,l}, y) \leq 4^{-k}s + \text{Diam } X_{k+1, l'} \leq 2 \times 4^{-k}s,
\end{equation*}

thus $\text{Diam } X_{k, l} \leq 4^{-k + 1}s$ as desired.

\end{proof}

Consider as above a subset $X$ of $E$ with finite diameter $d$, and assume that $N(X, 4^{-k}d) < + \infty$. 
Pick a sequence of partitions $(X_{j, l})_{1 \leq l \leq m(j)}$ for $1 \leq j \leq k$, as per Lemma \ref{lemma_partitions}.
For each $(j, l)$ choose a point $x_{j,l} \in X_{j, l}$.
Define the set of points of level $j$ as the set $L(j) =  \{x_{j, l} \}_{1 \leq l \leq m(j)}$. 
Say that $x_{j', l'}$ is an ancestor of $x_{j, l}$ if $X_{j, l} \subset X_{j', l'}$ : we will denote this relation by $(j', l') \rightarrow (j, l)$.

The next two lemmas study the cost of transporting a finite measure $m_k$ to another measure $n_k$ when these measures have
support in $L(k)$. The underlying idea is that we consider the finite metric space formed by the points $x_{j, l}$, $1 \leq j \leq k$,
as a \emph{metric tree}, where points are connected to their ancestor at the previous level, and we consider
the problem of transportation between two masses at the leaves of the tree. The transportation algorithm we consider
consists in allocating as much mass as possible at each point, then moving the remaining mass up one level in the tree, and iterating
the procedure.

A technical warning : please note that the transportation cost is usually defined between two probability measures ; however there is no difficulty in
extending its definition to the transportation between two finite measures of equal total mass, and we will freely
use this fact in the sequel.

\begin{lemma} \label{lemma_induction_cost_intermediate}
Let $m_j$, $n_j$ be measures with support in $L_j$. Define the measures $\tilde{m}_{j-1}$ and $\tilde{n}_{j-1}$ on 
$L_{j-1}$ by setting 

\begin{align}
 \label{def_tilde_m} \tilde{m}_{j-1} (x_{j-1, l'}) &  = \sum_{(j-1, l') \rightarrow (j, l)} (m_{j}(x_{j, l}) - n_{j}(x_{j, l})) \wedge 0, \\
 \label{def_tilde_n} \tilde{n}_{j-1} (x_{j-1, l'}) & = \sum_{(j-1, l') \rightarrow (j, l)} (n_{j}(x_{j, l}) - m_{j}(x_{j, l})) \wedge 0.
\end{align}

The measures $\tilde{m_{j-1}}$ and $\tilde{n_{j-1}}$ have same mass, so the transportation cost between them may be defined. 
Moreover, the following bound holds :

\begin{equation}
 W_p( m_j, n_j) \leq  2 \times 4^{-j + 2} d  \|m_j - n_j\|_{TV}^{1/p} + W_p ( \tilde{m}_{j-1} , \tilde{n}_{j-1} ).
\end{equation}

\end{lemma}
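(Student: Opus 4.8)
The plan is to construct an explicit (generally suboptimal) transport between $m_j$ and $n_j$ that leaves the common mass in place and pushes the surplus one level up the tree, and then to combine the resulting estimates through the triangle inequality for $W_p$. Write $\rho = m_j - n_j$ and let $\rho^+,\rho^-$ be its positive and negative parts, so that $m_j = (m_j \wedge n_j) + \rho^+$ and $n_j = (m_j \wedge n_j) + \rho^-$; since $m_j$ and $n_j$ carry the same total mass (which is what makes $W_p(m_j,n_j)$ meaningful, per the technical warning above), one has $\|\rho^+\| = \|\rho^-\| = \tfrac12 \|m_j - n_j\|_{TV}$. Reading off the defining sums \eqref{def_tilde_m}--\eqref{def_tilde_n}, the measures $\tilde m_{j-1}$ and $\tilde n_{j-1}$ are, up to an overall sign, the images of $\rho^-$ and $\rho^+$ under the ancestor map $x_{j,l}\mapsto x_{j-1,l'}$; as pushforward preserves total mass, this already yields $\|\tilde m_{j-1}\| = \|\rho^-\| = \|\rho^+\| = \|\tilde n_{j-1}\|$, the asserted equality of masses.

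First I would insert two intermediate measures obtained by replacing the surplus sitting at the leaves by its pushforward to level $j-1$,
\begin{equation*}
\mu_1 = (m_j \wedge n_j) + |\tilde n_{j-1}|, \qquad \mu_2 = (m_j \wedge n_j) + |\tilde m_{j-1}|,
\end{equation*}
which all share the total mass of $m_j$ and $n_j$, so that the chain
\begin{equation*}
W_p(m_j, n_j) \le W_p(m_j, \mu_1) + W_p(\mu_1, \mu_2) + W_p(\mu_2, n_j)
\end{equation*}
is legitimate. For the two outer terms I would use the coupling that fixes $m_j \wedge n_j$ and moves each unit of surplus mass at a leaf $x_{j,l}$ to its ancestor $x_{j-1,l'}$. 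Since $x_{j,l}$ and $x_{j-1,l'}$ both lie in the tile $X_{j-1,l'}$, Lemma~\ref{lemma_partitions}, condition~(\ref{cond_2}), gives $d(x_{j,l},x_{j-1,l'}) \le 4^{-j+2} d$, whence
\begin{equation*}
W_p(m_j, \mu_1) \le 4^{-j+2} d\,\|\rho^+\|^{1/p}, \qquad W_p(\mu_2, n_j) \le 4^{-j+2} d\,\|\rho^-\|^{1/p}.
\end{equation*}
For the middle term, keeping $m_j \wedge n_j$ on the diagonal and transporting $|\tilde n_{j-1}|$ to $|\tilde m_{j-1}|$ optimally gives $W_p(\mu_1,\mu_2) \le W_p(\tilde m_{j-1}, \tilde n_{j-1})$ (the sign being irrelevant since only the magnitudes enter).

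Combining the three bounds and inserting $\|\rho^+\| = \|\rho^-\| = \tfrac12\|m_j - n_j\|_{TV}$, so that $\|\rho^\pm\|^{1/p} = (\tfrac12)^{1/p}\|m_j - n_j\|_{TV}^{1/p} \le \|m_j - n_j\|_{TV}^{1/p}$, yields exactly
\begin{equation*}
W_p(m_j, n_j) \le 2 \times 4^{-j+2} d\,\|m_j - n_j\|_{TV}^{1/p} + W_p(\tilde m_{j-1}, \tilde n_{j-1}).
\end{equation*}
The routine points are the bookkeeping of equal masses (ensuring each Wasserstein distance above is defined between measures of equal total mass) and reconciling the sign convention in \eqref{def_tilde_m}--\eqref{def_tilde_n}. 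I expect the only genuinely substantive steps to be the verification that pushing surplus to the parent level costs at most $4^{-j+2}d$ per unit of mass and that the middle cost is dominated by $W_p(\tilde m_{j-1}, \tilde n_{j-1})$; both amount to exhibiting the explicit couplings described above together with the diameter control of Lemma~\ref{lemma_partitions}, so I anticipate no serious obstacle beyond careful mass accounting.
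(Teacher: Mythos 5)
Your proof is correct and follows essentially the same route as the paper's: a triangle inequality through the two intermediate measures obtained by adding the pushed-up surplus to $m_j \wedge n_j$, explicit couplings that fix the common mass and move each unit of surplus to its ancestor at cost at most $4^{-j+2}d$ (via condition (\ref{cond_2}) of Lemma \ref{lemma_partitions}), and domination of the middle term by $W_p(\tilde{m}_{j-1}, \tilde{n}_{j-1})$. The only cosmetic differences are your explicit handling of the sign convention (the paper's $\wedge\, 0$ in (\ref{def_tilde_m})--(\ref{def_tilde_n}) is indeed a typo for the positive part, as the paper's own proof uses $(\cdot)_+$) and that you bound the two outer terms separately instead of combining them through $x+y \leq 2^{1-1/p}(x^p+y^p)^{1/p}$, both routes yielding the stated constant.
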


\begin{proof}
Set $m_j \wedge n_j (x_{j, l}) =  m_j (x_{j, l}) \wedge n_j (x_{j, l})$. By the triangle inequality,

\begin{align*}
 W_p( m, n) \leq & W_p( m_j, m_j \wedge n_j) + \tilde{m}_{j-1} + W_p(m_j \wedge n_j + \tilde{m}_{j-1}, m_j \wedge n_j + \tilde{n}_{j-1}) \\
{} & + W_p(m_j \wedge n_j + \tilde{n}_{j - 1}, n_j). 
\end{align*}

We bound the term on the left. Introduce the transport plan $\pi_m$ defined by

\begin{align*}
\pi_m (x_{j ,l}, x_{j, l}) & =  m_j \wedge n_j (x_{j, l}), \\
\pi_m (x_{j ,l}, x_{j-1, l'}) & =  (m_j (x_{j, l}) - n_j(x_{j, l}))_ {+} \text { when } (j-1, l') \rightarrow (j, l).
\end{align*}

The reader can check that $\pi_m \in \Pro( m_j, m_j \wedge n_j + \tilde{m}_{j-1})$. Moreover,

\begin{align*}
 W_p( m_j, \tilde{m_{j-1}} ) & \leq \left( \int d^p(x, y) \pi_m(d x, d y) \right)^{1/p} \\
{} & \leq 4^{-j + 2} d \left( \sum_{ l = 1}^{m(j)} (m_j (x_{j, l}) - n_j(x_{j, l}))_ {+} \right)^{1/p}.
\end{align*}

Likewise,

\begin{equation*}
 W_p( n_j, m_j \wedge n_j + \tilde{n_{j-1}} ) \leq 4^{-j + 2} d \left( \sum_{ l = 1}^{m(j)} (n_j (x_{j, l}) - m_j(x_{j, l}))_ {+} \right)^{1/p}.
\end{equation*}

As for the term in the middle, it is bounded by $W_p ( \tilde{m}_{j-1} , \tilde{n}_{j-1} )$.
Putting this together 
and using the inequality $x + y \leq 2^{1 - 1/p} (x^p + y^p)^{1/p}$,, we get

\begin{equation*}
 W_p( m_j, n_j) \leq  2^{1 - 1/p} 4^{-j + 2} d  \left( \sum_{ l = 1}^{m(j)} |m_j (x_{j, l}) - n_j(x_{j, l})| \right)^{1/p} + W_p ( \tilde{m}_{j-1} , \tilde{n}_{j-1} ).
\end{equation*}

\end{proof}

\begin{lemma} \label{lemma_induction_cost}
 Let $m_j$, $n_j$ be measures with support in $L_j$. Define for $1 \leq j' < j$ the measures $m_j'$, $n_j'$ with support in $L_j'$ by

\begin{equation} \label{def_measures_above}
 m_{j'}(x_{j', l'})   = \sum_{(j', l') \rightarrow (j, l)} m_{j}(x_{j, l}) , \quad n_{j'}(x_{j', l'})  = \sum_{(j', l') \rightarrow (j, l)} n_{j}(x_{j, l}).
\end{equation}

The following bound holds :

\begin{equation} \label{induction_cost}
 W_p(m_j, n_j) \leq \sum_{j' = 1}^j  2 \times 4^{-j' + 2} d \| m_j' - n_j' \|_{TV}^{1/p}
\end{equation}

\begin{proof}
 We proceed by induction on $j$. For $j = 1$, the result is obtained by using the simple bound
$W_p(m_1, n_1) \leq d \| m_1 - n_1 \|_{\text{TV}}^{1/p}$.

Suppose that (\ref{induction_cost}) holds for measures with support in $L_{j-1}$. By lemma
\ref{lemma_induction_cost_intermediate}, we have

\begin{equation*}
  W_p( m_j, n_j) \leq 2 \times 4^{-j + 2} d \| m_j - n_j\|_{TV}^{1/p} + W_p(\tilde{m}_{j-1}, \tilde{n}_{j-1})
\end{equation*}

where $\tilde{m}_{j-1}$ and $\tilde{n}_{j-1}$ are defined by (\ref{def_tilde_m}) and (\ref{def_tilde_n}) respectively.
For $1 \leq i < j-1$, define following (\ref{def_measures_above})

\begin{equation*}
 \tilde{m}_{i}(x_{i, l'})  = \sum_{(i, l') \rightarrow (j-1, l)} \tilde{m}_{j-1}(x_{j-1, l}) , \quad \tilde{n}_{i}(x_{i, l'}) = \sum_{(i, l') \rightarrow (j-1, l)} \tilde{n}_{j-1}(x_{j-1, l}).
\end{equation*}

We have

\begin{equation*}
  W_p( m_j, n_j) \leq 2 \times 4^{-j + 2} d \| m_j - n_j\|_{TV}^{1/p} + \sum_{j' = 1}^{j - 1} 2 \times 4^{-j' + 2} d \| \tilde{m}_i - \tilde{n}_i \|_{TV}^{1/p}.
\end{equation*}

To conclude, it suffices to check that for $1 \leq i \leq j - 1$,
$\| \tilde{m}_i - \tilde{n}_i \|_{TV} = \| m_i - n_i \|_{TV}$.

\end{proof}

\end{lemma}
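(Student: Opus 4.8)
The plan is to prove the bound by induction on $j$, exactly as the statement suggests. The base case $j=1$ is immediate: since all the mass of $m_1$ and $n_1$ sits on the single level $L_1$, whose tiles have diameter at most $4^{-1+2}d = 4d$, one can transport the positive part of $m_1-n_1$ to the negative part while moving each unit of mass a distance at most $d$ (the diameter of $X$), giving $W_p(m_1,n_1) \leq d\,\|m_1-n_1\|_{TV}^{1/p}$, which is dominated by the $j'=1$ term on the right-hand side. So the heart of the argument is the inductive step.

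For the inductive step I would invoke Lemma \ref{lemma_induction_cost_intermediate} directly: it yields
\begin{equation*}
 W_p(m_j, n_j) \leq 2\times 4^{-j+2} d\,\|m_j - n_j\|_{TV}^{1/p} + W_p(\tilde{m}_{j-1}, \tilde{n}_{j-1}),
\end{equation*}
peeling off the top-level cost and leaving a transport problem between two measures $\tilde{m}_{j-1}, \tilde{n}_{j-1}$ supported on $L_{j-1}$. I then apply the induction hypothesis to this residual problem, writing $\tilde{m}_i, \tilde{n}_i$ for the measures obtained by pushing $\tilde{m}_{j-1}, \tilde{n}_{j-1}$ up to level $i$ via (\ref{def_measures_above}). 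This gives a sum of the correct shape, $\sum_{j'=1}^{j-1} 2\times 4^{-j'+2} d\,\|\tilde{m}_i - \tilde{n}_i\|_{TV}^{1/p}$, and combining it with the peeled-off top term reconstructs the full sum from $j'=1$ to $j$.

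The one nontrivial point—and the step I expect to be the crux—is the telescoping identity
\begin{equation*}
 \|\tilde{m}_i - \tilde{n}_i\|_{TV} = \|m_i - n_i\|_{TV}, \qquad 1 \leq i \leq j-1,
\end{equation*}
which is needed to identify the induction-hypothesis terms with the terms in the target sum. To verify it I would unwind the definitions: at level $j-1$, for a fixed tile one has $\tilde{m}_{j-1}(x_{j-1,l'}) - \tilde{n}_{j-1}(x_{j-1,l'}) = \sum_{(j-1,l')\to(j,l)} (m_j - n_j)(x_{j,l})$, since $(a\wedge 0) - (b\wedge 0)$ with $b = -a$ equals $a$ (here $a = (m_j-n_j)(x_{j,l})$ and $b = (n_j-m_j)(x_{j,l})$). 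Thus $\tilde{m}_{j-1} - \tilde{n}_{j-1}$ is exactly the signed measure $m_{j-1} - n_{j-1}$ obtained by aggregating $m_j - n_j$, and in particular the two measures have equal total mass so the transport cost on the left of Lemma \ref{lemma_induction_cost_intermediate} is well defined. Because aggregation up the tree is additive on signed measures and commutes over further levels, pushing up to any level $i \leq j-1$ preserves this equality, giving $\tilde{m}_i - \tilde{n}_i = m_i - n_i$ as signed measures and hence the claimed equality of total-variation norms. With this identity in hand the two sums match term by term and the induction closes.
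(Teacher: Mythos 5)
Your proposal is correct and follows essentially the same route as the paper's proof: induction on $j$, peeling off the top level via Lemma \ref{lemma_induction_cost_intermediate}, and identifying the residual terms through the equality $\| \tilde{m}_i - \tilde{n}_i \|_{TV} = \| m_i - n_i \|_{TV}$. In fact you go slightly further than the paper, which leaves that identity as ``it suffices to check''; your verification that $\tilde{m}_{j-1} - \tilde{n}_{j-1} = m_{j-1} - n_{j-1}$ as signed measures (and hence, by linearity and transitivity of the aggregation up the tree, $\tilde{m}_i - \tilde{n}_i = m_i - n_i$ for all $i \leq j-1$) is exactly the missing computation, and it also justifies that $\tilde{m}_{j-1}$ and $\tilde{n}_{j-1}$ have equal mass.
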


\begin{proof}[Proof of Proposition \ref{main_prop_independent_case}.]

We pick some positive integer $k$ whose value will be determined at a later point.
Introduce the sequence of partitions $(X_{j, l})_{1 \leq l \leq m(j)}$ for $0 \leq j \leq k$ as in the lemmas above, as well as the points $x_{j, l}$.
Define $\mu_k$ as the measure with support in $L(k)$ such that $\mu_k(x_{k, l}) = \mu(X_{k, l})$ for $1 \leq l \leq m(k)$.
The diameter of the sets $X_{k, l}$ is bounded by $4^{-k + 1} d$, therefore $W_p(\mu, \mu_k) \leq 4^{-k + 1} d$.

Let $L_n^k$ denote the empirical measure associated to $\mu_k$. 

For $0 \leq j \leq k-1$, define as in Lemma \ref{lemma_induction_cost} 
the measures $\mu_j$ and $L_n^j$ with support in $L(j)$ by

\begin{align}
\mu_j (x_{j, l'}) & = \sum_{(j, l') \rightarrow (k, l)} \mu_k(x_{k, l}) \\
L_n^j (x_{j, l'}) & =  \sum_{(j, l') \rightarrow (k, l)} L_n^k(x_{k, l}).
\end{align}

It is simple to check that $\mu_j(x_{j, l}) = \mu(X_{j, l})$, and that $L_n^j$ is the empirical measure
associated with $\mu_j$.
Applying (\ref{induction_cost}), we get

\begin{equation} \label{eq_bound_empirical}
 W_p( \mu_k, L_n^k) \leq \sum_{j = 1}^k 2 \times 4^{-j + 2} d \| \mu_j - L_n^j \|_{TV}^{1/p}.
\end{equation}

Observe that $n L_n^j(x_{j, l})$ is a binomial law with parameters
$n$ and $\mu(X_{j, l})$. The expectation of $\| \mu_j - L_n^j \|_{TV}$ is bounded as follows :

\begin{align*}
 \mathbb{E} (\| \mu_i - L_n^i \|_{TV}) & = 1/2 \sum_{l = 1}^{m(j)} \mathbb{E}(| (L_n^j - \mu_j)(x_{j, l}) |) \\
{} & \leq 1/2 \sum_{l = 1}^{m(j)}  \sqrt{\mathbb{E}(| (L_n^j - \mu_j)(x_{j, l}) |^2) } \\
{} & = 1/2  \sum_{l = 1}^{m(j)} \sqrt{ \frac{ \mu(X_{j, l})(1 - \mu(X_{j, l})) } {n} } \\
{} & \leq 1/2 \sqrt{ \frac{m(j)}{n}}.
\end{align*}

In the last inequality, we use Cauchy-Schwarz's inequality and the fact that $(X_{j, l})_{1 \leq l \leq m(j)}$ is a partition
of $X$. Putting this back in (\ref{eq_bound_empirical}), we get

\begin{align*}
\mathbb{E}( W_p( \mu_k, L_n^k) ) & \leq n^{-1/2p} \sum_{j = 1}^k 2^{1-1/p}  4^{(-j + 2)} d m(i)^{1/2p} \\
{} & \leq  2^{5-1/p} n^{-1/2p} \sum_{j = 1}^k   4^{-j} d N(X, 4^{-j}d)^{1/2p} \\
{} & \leq 2^{6-1/p}/3 n^{-1/2p} \int_{4^{-(k + 1)}d}^{d/4} N(X, \delta)^{1/2p} d \delta.
\end{align*}

In the last line, we use a standard sum-integral comparison argument. 

By the triangle inequality, we have

\begin{equation*}
 W_p( \mu, L_n) \leq W_p( \mu, \mu_k) + W_p(\mu_k, L_n^k) + W_p(L_n^k, L_n).
\end{equation*}

We claim that $\mathbb{E}(W_p(L_n^k, L_n)) \leq W_p( \mu, \mu_k)$. Indeed, choose $n$ i.i.d. couples $(X_i, X_i^k)$ such that
$X_i \sim \mu$, $X_i^k \sim \mu_k$, and the joint law of $(X_i, X_i^k)$ achieves an optimal coupling, i.e.
$\mathbb{E} |X_i - X_i^k|^p = W_p^p(\mu, \mu^k)$.
We have the identities in law 

\begin{equation*}
 L_n \sim \frac{1}{n} \sum_{i = 1}^n \delta_{X_i}, \:  L_n^k \sim \frac{1}{n} \sum_{i = 1}^n \delta_{X_i^k}.
\end{equation*}

Choose the transport plan that sends $X_i$ to $X_i^k$ : this gives the upper bound 

\begin{equation*}
W_p^p(L_n, L_n^k) \leq 1/n \sum_{i = 1}^n |X_i - X_i^k|^p
\end{equation*}

and passing to expectation proves our claim.

Thus, $\mathbb{E}(W_p( \mu, L_n)) \leq 2 W_p( \mu, \mu_k) + \mathbb{E}(W_p(\mu_k, L_n^k))$.
Choose now $k$ as the largest integer such that $4^{-(k+1)}d \geq t$. This imposes $4^{-k + 1}d \leq 16 t$, and 
this finishes the proof.

\end{proof}

\begin{proof}[Proof of Corollary \ref{thm_independent_case_finite_dim}]
 It suffices to use Proposition \ref{main_prop_independent_case} along with (\ref{assumption_dimension_independent_case}) and to optimize in $t$.
\end{proof}

\section{Proof of Theorem \ref{theorem_gaussian_vector}.}

\begin{proof}[Proof of Theorem \ref{theorem_gaussian_vector}.]
 
We begin by noticing that statement (\ref{result_thm_gaussian_3}) is a simple consequence of statement
(\ref{result_thm_gaussian_2}) and the tensorization of $\bt_2$ :  we have by Corollary \ref{corollary_transport_ineq_gaussian}

\begin{equation*}
 \pr(W_2(L_n, \mu) \geq \mathbb{E}(W_2(L_n, \mu) + t) \leq e^{-n t^2/(2 \sigma^2)},
\end{equation*}

and it suffices to choose $t = \lambda \psi^{-1}( \log n)$ to conclude. We now turn to the other claims.

Denote by $K$ the unit ball of the Cameron-Martin space associated to $E$ and $\mu$, and by
$B$ the unit ball of $E$.
According to the Gaussian isoperimetric inequality (see \cite{ledoux1996isoperimetry}),
for all $\lambda > 0$ and $\varepsilon > 0$,

\begin{equation*}
\mu( \lambda K + \varepsilon B) \geq \Phi \left( \lambda + \Phi^{-1}( \mu( \varepsilon B)) \right)
\end{equation*}

where $\Phi(t) = \int_{- \infty}^t e^{- u^2/2} d u / \sqrt{2 \pi}$ is the Gaussian c.d.f..

Choose $\lambda > 0$ and $\varepsilon > 0$, and set $X = \lambda K + \varepsilon B$. Note 

\begin{equation*}
 \mu' = \frac{1}{ \mu( X)} \mathbf{1}_{X} \mu
\end{equation*}

the restriction of $\mu$ to the enlarged ball. 

The diameter of $X$ is bounded by $2( \sigma \lambda + \varepsilon)$. 
The $W_2$ distance between $L_n$ and $\mu$ is thus bounded as follows :

\begin{equation} \label{bound_w_2_gaussian}
 W_2(L_n, \mu) \leq 2 W_2( \mu, \mu') + c t + c n^{-1/4} \int_{t}^{( \sigma \lambda + \varepsilon)/2} N(X, \delta)^{1/4} d \delta
\end{equation}

Set 

\begin{align}
 I_1 & = W_2(\mu, \mu') \\
 I_2 & = t \\
 I_3 & = n^{-1/4} \int_{t}^{( \sigma \lambda + \varepsilon)/2} N(X, \delta)^{1/4} d \delta.
\end{align}

To begin with, set $\varepsilon = t/2$.

\emph{Controlling $I_1$.} We use transportation inequalities and the
Gaussian isoperimetric inequality. By Lemma \ref{lemma_transport_ineq_gaussian}, $\mu$ satisfies a $\bt_2(2 \sigma^2)$ inequality, 
so that we have

\begin{align*}
 W_2(\mu, \mu') & \leq \sqrt{ 2 \sigma^2 H( \mu' | \mu)} = \sqrt{-2 \sigma^2 \log \mu( \lambda K + \varepsilon B)} \\
{} & \leq \sqrt{-2 \sigma^2 \log \Phi( \lambda  + \Phi^{-1}(\mu(\varepsilon B)))} \\
{} & = \sqrt{2} \sigma \sqrt{ - \log \Phi ( \lambda + \Phi^{-1}(e^{-\psi(t/2)})) }.
\end{align*}

Introduce the tail function of the Gaussian distribution 

\begin{equation*}
\Upsilon(x) = \sqrt{2 \pi}^{-1} \int_x^{ + \infty} e^{-y^2/2} d y. 
\end{equation*}

We will use the fact that $\Phi^{-1} + \Upsilon^{-1} = 0$, which comes from symmetry of the Gaussian distribution. We will also use the bound
$\Upsilon(t) \leq e^{-t^2/2}/2$, $t \geq 0$ and its consequence

\begin{equation*}
 \Upsilon^{-1}(u) \leq \sqrt{-2 \log u}, \quad 0 < u \leq 1/2.
\end{equation*}

We have

\begin{equation*}
 \Phi^{-1}(e^{-\psi(t/2)}) = - \Upsilon^{-1}(e^{-\psi(t/2)}) \geq - \sqrt{2 \psi(t/2)}
\end{equation*}

as soon as $\psi(t/2) \geq \log 2$.
The elementary bound $\log \frac{1}{1 - x} \leq 2x$ for $x \leq 1/2$ yields

\begin{align*}
 \sqrt{- 2 \log \Phi ( u)} & = \sqrt{2} \left( \log \frac{1}{ 1 - \Upsilon(u)} \right)^{1/2} \\
{} & \leq \sqrt{2} e^{-u^2/4}
\end{align*}

whenever $u \geq \Upsilon^{-1}(1/2) = 0$. Putting this together, we have

\begin{equation} \label{control_I_1}
 I_1 \leq \sqrt{2} \sigma e^{- (\lambda - \sqrt{2 \psi(t/2)})^2/4}.
\end{equation}

whenever 

\begin{equation} \label{technical_conditions_gaussian_measures}
\psi(t/2) \geq \log 2 \text{ and }\lambda - \sqrt{2 \psi(t/2)} \geq 0.
\end{equation}

\emph{Controlling $I_3$.} The term $I_3$ is bounded by $1/2 n^{-1/4} (\sigma \lambda + t/2) N(X, t)^{1/4}$ (just bound
the function inside by its value at $t$, which is minimal).
Denote $k = N(\lambda K, t - \varepsilon)$ the covering number of $\lambda K$
(w.r.t. the norm of $E$).
Let $x_1, \ldots, x_k \in K$ be such that union of the balls
$B(x_i, t - \varepsilon)$ contains $\lambda K$.
From the triangle inequality we get the inclusion

\begin{equation*}
 \lambda K + \varepsilon B \subset \bigcup_{i = 1}^k B(x_i, t).
\end{equation*}

Therefore, $N(X, t) \leq N(\lambda K, t - \varepsilon) = N(\lambda K, t/2)$.

We now use the well-known link between $N(\lambda K, t/2)$ and the small ball function.
 Lemma 1 in \cite{kuelbs1993metric} gives the bound

\begin{equation*} 
 N( \lambda K, t/2) \leq e^{\lambda^2/2 + \psi(t/4)} \leq e^{\lambda^2/2 + \kappa \psi(t/2)}.
\end{equation*}

so that 

\begin{equation} \label{control_I_3}
 I_3 \leq \frac{1}{2} (\sigma \lambda + t/2) e^{ \frac{\lambda^2}{8} + \frac{\kappa}{4}  \psi(t/2) - \frac{1}{4} \log n }.
\end{equation}

Remark that we have used the doubling condition on $\psi$, so that we require 

\begin{equation} \label{technical_conds_gaussian_2}
 t/4 \leq t_0.
\end{equation}

\emph{Final step.}  Set now $t = 2 \psi^{-1}( a \log n)$ and $\lambda = 2 \sqrt{2 a \log n}$, with $a > 0$ yet undetermined. Using (\ref{control_I_1}) and (\ref{control_I_3}), we see that
there exists a universal constant $c$ such that

\begin{align*}
 \mathbb{E}(W_2( L_n, \mu)) \leq & c \left[ \psi^{-1}( a \log n) + \sigma e^{- (a/2) \log n} \right.\\
{} & \quad \left. + (\sigma \sqrt{a \log n} + \psi^{-1}( a \log n) ) e^{[a(1 + \kappa/4) - 1/4] \log n} \right].
\end{align*}

Choose $a = 1/(6 + \kappa)$ and assume 
$\log n \geq (6 + \kappa) (\log 2 \vee \psi(1) \vee \psi(t_0/2) )$.
This guarantees that the technical conditions 
(\ref{technical_conditions_gaussian_measures}) and (\ref{technical_conds_gaussian_2}) are enforced, and that 
$\psi^{-1}( a \log n) \leq 1$. Summing up, we get :

\begin{equation*}
 \mathbb{E}(W_2( L_n, \mu)) \leq c \left[  \psi^{-1}( \frac{1}{6 + \kappa} \log n) + (1 + \sigma \sqrt{\frac{1}{6 + \kappa} \log n}) n^{- 1/(12 + 2 \kappa)} \right].
\end{equation*}

Impose $\log n \geq (6 + \kappa)/\sigma^2$ : this ensures $\sigma \sqrt{ \frac{1}{6 + \kappa} \log n} \geq 1$. And finally, there exists some $c > 0$ such that
for all $x \geq 1$, $\sqrt{ \log x} x^{-1/4} \leq c$ : this implies 

\begin{equation*}
 \sqrt{\frac{1}{6 + \kappa} \log n} n^{-1/(24 + 4\kappa)} \leq c.
\end{equation*}

This gives

\begin{equation*}
 (1 + \sigma \sqrt{\frac{1}{6 + \kappa} \log n}) n^{- 1/(12 + 2 \kappa)} \leq c \sigma n^{-1/[4(6 + \kappa)]}
\end{equation*}

and the proof is finished.

\end{proof}

\section{Proofs in the dependent case}

We consider hereafter a Markov chain $(X_n)_{n \in \mathbb{N}}$ defined by $X_0 \sim \nu$ and the transition kernel $P$.
Let us denote by

\begin{equation*}
 L_n = \sum_{i = 1}^n \delta_{X_i}
\end{equation*}

its occupation measure.

\begin{prop}

Suppose that the Markov chain satisfies (\ref{decay_of_variance}) for some $C > 0$ and $\lambda < 1$. Then the following holds :

\begin{equation} \label{bound_markov_poincare}
 \mathbb{E}_\nu (W_p (L_n, \pi)) \leq c \left( t + \left( \frac{C}{(1 - \lambda) n} \|\frac{d \nu}{d \pi} \|_{r} \right)^{1/2p} \int_t^{d/4} N(X, t)^{1/2p(1+1/r)} d t \right).
\end{equation}

\end{prop}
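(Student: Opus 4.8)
The plan is to mirror the proof of Proposition \ref{main_prop_independent_case} step by step, replacing the only place where independence was used---namely the binomial/variance bound on $\mathbb{E}(\|\mu_j - L_n^j\|_{TV})$---by an estimate that exploits the decay-of-variance hypothesis (\ref{decay_of_variance}) instead. Concretely, I would reintroduce the sequence of partitions $(X_{j,l})$ and the associated points $x_{j,l}$, define $\mu_k$, $L_n^k$ and their coarsenings $\mu_j$, $L_n^j$ exactly as before, and invoke Lemma \ref{lemma_induction_cost} to obtain the same master inequality
\begin{equation*}
 W_p(\mu_k, L_n^k) \leq \sum_{j=1}^k 2\times 4^{-j+2} d\, \|\mu_j - L_n^j\|_{TV}^{1/p}.
\end{equation*}
The whole argument downstream of this---the sum-to-integral comparison, the triangle-inequality splitting $W_p(\mu, L_n)\leq 2W_p(\mu,\mu_k)+W_p(\mu_k,L_n^k)$, and the choice of $k$ so that $4^{-(k+1)}d\geq t$---carries over verbatim. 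So the entire content of the proof is to produce a good bound on $\mathbb{E}_\nu(\|\mu_j - L_n^j\|_{TV})$ in the Markov setting.

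The main obstacle, and the heart of the proof, is precisely this variance estimate. Writing $A_{j,l}=X_{j,l}$, we have $\|\mu_j - L_n^j\|_{TV} = \tfrac12\sum_l |L_n^j(x_{j,l}) - \pi(A_{j,l})|$, and by Cauchy--Schwarz over the $m(j)$ tiles it suffices to control $\sum_l \mathbb{E}_\nu\big[(L_n^j(x_{j,l})-\pi(A_{j,l}))^2\big]$, i.e.\ $\sum_l \mathrm{Var}$-type terms for the time averages $\tfrac1n\sum_{i=1}^n \mathbf{1}_{A_{j,l}}(X_i)$. The plan is to expand each such quantity as a double sum over $1\leq i,i'\leq n$ of covariance-like terms $\mathbb{E}_\nu[(f_{j,l}(X_i)-\pi f_{j,l})(f_{j,l}(X_{i'})-\pi f_{j,l})]$ with $f_{j,l}=\mathbf{1}_{A_{j,l}}$, and then bound the lag-$|i-i'|$ correlation using (\ref{decay_of_variance}). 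The standard maneuver is to pass from the chain started at $\nu$ to the stationary chain: for a function $g$ with $\pi g = 0$ one has $\mathbb{E}_\nu[g(X_i)g(X_{i'})] = \mathbb{E}_\nu[g(X_i)(P^{|i'-i|}g)(X_i)]$, and one estimates $\int (P^m g)\,g\,d\nu$ by Cauchy--Schwarz together with a change of measure $\tfrac{d\nu}{d\pi}$, introducing the Hölder factor $\|\tfrac{d\nu}{d\pi}\|_r$ and the conjugate exponent on the $P^m g$ side; the decay $\mathrm{Var}_\pi P^m g \leq C\lambda^m \mathrm{Var}_\pi g$ then gives a factor $C\lambda^{|i'-i|}$ times $\mathrm{Var}_\pi f_{j,l}\leq \pi(A_{j,l})$. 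Summing the geometric series $\sum_{m\geq 0}\lambda^m = 1/(1-\lambda)$ over the lags produces the prefactor $C/[(1-\lambda)n]$ after dividing by $n$.

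Assembling these ingredients, the per-tile bound will take the form $\mathbb{E}_\nu\big[(L_n^j(x_{j,l})-\pi(A_{j,l}))^2\big]^{1/2} \lesssim \big(\tfrac{C}{(1-\lambda)n}\|\tfrac{d\nu}{d\pi}\|_r\big)^{1/2}\pi(A_{j,l})^{\beta}$ for an exponent $\beta$ reflecting the $L^r$ change of measure; after summing over $l$ and applying Hölder/Cauchy--Schwarz across the $m(j)$ tiles (using $\sum_l \pi(A_{j,l})=1$), the exponent on $m(j)$ becomes $1/2(1+1/r)$ rather than the $1/2$ seen in the independent case---this is exactly what converts $N(X,\delta)^{1/2p}$ into $N(X,\delta)^{1/2p(1+1/r)}$ in the final integral. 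Feeding $\mathbb{E}_\nu(\|\mu_j - L_n^j\|_{TV})^{1/p}$ back into the master inequality, performing the sum-to-integral comparison with $m(j)\leq N(X,4^{-j}d)$, and tracking the constants exactly as in Proposition \ref{main_prop_independent_case} yields (\ref{bound_markov_poincare}). The delicate points I expect to have to handle carefully are the bookkeeping of the two exponents in the Hölder step (making sure the conjugate exponent lands correctly so that $r$ appears as $1+1/r$ in both the prefactor power $1/2p$ and the covering-number power $1/2p(1+1/r)$) and the correct routing of the $\lambda^{|i-i'|}$ geometric decay so that the off-diagonal terms do not spoil the $1/n$ rate.
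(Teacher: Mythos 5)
Your plan is essentially the paper's own proof: the same partition scheme and master inequality from Lemma \ref{lemma_induction_cost}, the same reduction to bounding $\mathbb{E}_\nu|(L_n-\pi)(X_{j,l})|$ via a Cauchy--Schwarz/covariance expansion, the same change of measure to bring in $\|\frac{d\nu}{d\pi}\|_{r}$ combined with (\ref{decay_of_variance}) for geometric decay in the lag, and the same H\"older step across tiles (using $\sum_l \pi(X_{j,l})=1$) that converts the exponent $1/2p$ on $m(j)$ into $1/2p(1+1/r)$; summing the geometric series indeed produces the prefactor $C/[(1-\lambda)n]$, exactly as in the paper's inequality (\ref{markov_chain_occupation_time}).

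Two details in your sketch need (easy) repairs. First, the comparison of $L_n$ with $L_n^k$ does \emph{not} carry over verbatim: in the independent case the bound $\mathbb{E}(W_p(L_n^k,L_n)) \leq W_p(\mu,\mu_k)$ was proved by drawing $n$ i.i.d. optimally coupled pairs $(X_i,X_i^k)$, and for a Markov chain you cannot prescribe the joint law of the sample in this way. The fix is to define $L_n^k$ as the image of $L_n$ under the map sending each point to the representative $x_{k,l}$ of its tile; then $W_p(L_n,L_n^k) \leq 4^{-k+1}d$ holds deterministically, which is implicitly what the paper does (whence the additive term $2\times 4^{-k+1}d$ in its display (\ref{eq_bound_markov})). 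Second, in your covariance estimate you apply the Markov property first and then change measure, writing $\int (P^m g)\,g\,d\nu$: but the law of $X_i$ under $\mathbb{P}_\nu$ is $\nu P^i$, not $\nu$, so this routing requires the extra observation that $\|\frac{d(\nu P^i)}{d\pi}\|_{r} \leq \|\frac{d\nu}{d\pi}\|_{r}$ (contraction of the adjoint of $P$ on $L^r(\pi)$). The paper's ordering avoids this issue: change measure at time $0$ on the two-point functional, namely $\mathbb{E}_\nu[f_A(X_i)f_A(X_j)] \leq \|\frac{d\nu}{d\pi}\|_{r}\,(\mathbb{E}_\pi|f_A(X_i)f_A(X_j)|^s)^{1/s}$ with $1/r+1/s=1$, and only then use the Markov property with the $L^s(\pi)$-contraction of $P$, followed by H\"older with $\tilde{p}=2$ and $\tilde{q}$. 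Neither repair changes your architecture; with them, your argument coincides with the paper's.
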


\begin{proof}

An application of (\ref{induction_cost}) as in 
(\ref{eq_bound_empirical}) yields

\begin{equation} \label{eq_bound_markov}
 \mathbb{E}( W_p (L_n, \pi) ) \leq 2 \times 4^{-k + 1}d + \sum_{j = 1}^k 2 \times 4^{-j + 2} d \left( \sum_{l = 1}^{m(j)} \mathbb{E}|(L_n - \pi)(X_{j, l})| \right)^{1/p}.
\end{equation}

Let $A$ be a measurable subset of $X$, and
set $f_A(x) = \mathbf{1}_A (x) - \pi (A)$. We have

\begin{align*}
\mathbb{E}|(L_n - \pi)(A)| & = 1/n \mathbb{E}_\nu | \sum_{i = 1}^n f_A (X_i)|  \\
& \leq 1/n \sqrt{ \sum_{i = 1}^n \sum_{j = 1}^n \mathbb{E}_\nu \left[ f_A (X_i) f_A(X_j) \right] }.
\end{align*}

Let $\tilde{p}, \tilde{q}, r \geq 1$ be such that $1/\tilde{p} + 1/\tilde{q} + 1/r = 1$, and let $s$ be defined by $1/s = 1/\tilde{p} + 1/\tilde{q}$.
Now, using H\"older's inequality with $r$ and $s$,

\begin{equation*}
 \mathbb{E}_\nu \left[ f_A (X_i) f_A(X_j) \right] \leq \|\frac{d \nu}{d \pi} \|_{r} (\mathbb{E}_\pi | f_A (X_i) f_A(X_j) |^s  )^{1/s}.
\end{equation*}

Use the Markov property and the fact that $f \mapsto Pf$ is a contraction in $L^s$ to get

\begin{equation*}
 \mathbb{E}_\nu \left[ f_A (X_i) f_A(X_j) \right] \leq \|\frac{d \nu}{d \pi} \|_{r} \| f_A P^{j-i} f_A \|_s.
\end{equation*}

Finally, use H\"older's inequality with $\tilde{p}, \tilde{q}$ : we get

\begin{equation}  \label{bound_occupation_measure_r_s}
\mathbb{E}_\nu \left[ f_A (X_i) f_A(X_j) \right] \leq \|\frac{d \nu}{d \pi} \|_{r} \| P^{j-i} f_A \|_{\tilde{p}} \| f_A \|_{\tilde{q}}.
\end{equation}

Set $\tilde{p} = 2$ and note that for $1 \leq t \leq + \infty$, we have $\|f_A\|_{t} \leq 2 \pi(A)^{1/t}$. 
Use (\ref{decay_of_variance}) applied to the centered function $f_A$ to get

\begin{equation*}
  \mathbb{E}_\nu \left[ f_A (X_i) f_A(X_j) \right] \leq  4 C \lambda^{j-i} \|\frac{d \nu}{d \pi} \|_{r} \pi(A)^{1 - 1/r},
\end{equation*}

and as a consequence,

\begin{equation} \label{markov_chain_occupation_time}
 \mathbb{E}|(L_n - \pi)(A)| \leq \frac{1}{\sqrt{n}} \frac{ 2 \sqrt{2 C} }{ \sqrt{1 - \lambda} } \|\frac{d \nu}{d \pi} \|_{r}^{1/2} \pi(A)^{1/2 - 1/2r}.
\end{equation}

Come back to (\ref{eq_bound_markov}) : we have

\begin{align*}
  \mathbb{E}( W_p (L_n, \pi) ) & \leq 4^{-k + 1}d + 32 (\frac{ 2 \sqrt{2 C}}{ \sqrt{1 - \lambda} })^{1/p} \|\frac{d \nu}{d \pi} \|_{r}^{1/2p} n^{-1/2p} \\
{} & \quad \times \sum_{j = 1}^k 4^{-j} d \left( \sum_{l = 1}^{m(j)} \pi(X_{j, l})^{1/2 - 1/2r} \right)^{1/p} \\
{} & \leq 4^{-k + 1}d + c \left( \frac{C}{(1 - \lambda) n} \|\frac{d \nu}{d \pi} \|_{r} \right)^{1/2p} \sum_{j = 1}^k 4^{-j} d m(j)^{1/2p(1+1/r)} \\
{} & \leq c \left( t + \left( \frac{C}{(1 - \lambda) n} \|\frac{d \nu}{d \pi} \|_{r} \right)^{1/2p} \int_t^{d/4} N(X, t)^{1/2p(1+1/r)} d t \right).
\end{align*}

\end{proof}

\begin{proof}[Proof of Theorem \ref{thm_markov}.]
Use (\ref{bound_markov_poincare}) and (\ref{dimension_assumption_markov}) to get

\begin{equation*}
 \mathbb{E} W_p(L_n, \mu) \leq c \left[ t + A  t^{-\alpha/2p(1+1/r) + 1} \right]
\end{equation*}

where 

\begin{equation*}
A = \frac{2p}{\alpha(1+1/r)} (C/(1 - \lambda))^{1/2p}\|\frac{d \nu}{d \pi} \|_{r}^{1/2p} n^{-1/2p} d^{\alpha/2p(1+1/r)}.
\end{equation*}

Optimizing in $t$ finishes the proof.

\end{proof}

We now move to the proof in the unbounded case.

\begin{proof}[Proof of Theorem \ref{thm_markov_unbounded}]

We remind the reader that the following assumption stands~: for $X \subset E$ with diameter bounded by $d$,

\begin{equation} \label{assumption_state_space_markov}
 N(X, \delta) \leq k_E (d/\delta)^\alpha.
\end{equation}

In the following lines, we will make use of the elementary inequalities

\begin{equation} \label{elementary_ineq}
 (x + y)^p \leq 2^{p-1} (x^p + y^p) \leq 2^{p - 1} (x + y)^p.
\end{equation}

\emph{Step 1.}

Pick increasing sequence of numbers $d_i > 0$ to be set later on, and some point $x_0 \in E$.
Define $C_1 = B(x_0, d_1)$, and $C_i = B(x_0, d_i) \ B(x_0, d_{i-1})$ 
for $i \geq 2$.

The idea is as follows : we decompose the state space $E$ into a union of rings, and deal separately with $C_1$ on the one hand, using the case of Theorem \ref{thm_markov} as guideline, 
and with the union of the $C_i$, $i \geq 2$ on the other hand, where we use more brutal bounds.

We define partial occupation measures 

\begin{equation*}
 L_n^i = 1/n \sum_{j = 1}^n \delta_{X_j} \mathbf{1}_{X_j \in C_i}
\end{equation*}

and their masses $m_i = L_n^i(E)$. We have the inequality

\begin{equation} \label{decomposition_w_p}
 W_p^p(L_n, \pi) \leq \sum_{i \geq 1} m_i W_p^p( 1/m_i L_n^i, \pi).
\end{equation}

On the other hand,

\begin{align*}
 W_p(1/m_i L_n^i, \pi) & \leq (\int d(x_0, x)^p d \pi)^{1/p} + (\int d(x_0, x)^p d (1/m_i L_n^i) )^{1/p} \\
{} & \leq M_p^{1/p} + d_i,
\end{align*}

so that $W_p^p(1/m_i L_n^i, \pi) \leq 2^{p-1} \left( M_p + d_i^p \right)$ using (\ref{elementary_ineq}). Also, using (\ref{decomposition_w_p}) and
(\ref{elementary_ineq}) yields

\begin{equation*}
W_p(L_n, \pi) \leq m_1^{1/p} W_p(1/m_1 L_n^1, \pi) + 2^{1 - 1/p} \left( \sum_{i \geq 2} m_i \left[ M_p + d_i^p \right] \right)^{1/p}.
\end{equation*}

Pass to expectations to get

\begin{equation} \label{bound_decomposition}
 \mathbb{E} [ W_p(L_n, \pi) ] \leq \mathbb{E} \left[ m_1^{1/p} W_p(1/m_1 L_n^1, \pi) \right] + 2^{1 - 1/p} \left( \sum_{i \geq 2} \pi(C_i) \left[ M_p + d_i^p \right] \right)^{1/p}
\end{equation}

We bound separately the left and right term in the right-hand side of (\ref{bound_decomposition}), starting with the right one.

\emph{Step 2}.

Choose some $q > p$ and use Chebyshev's inequality to bound the sum on the right by

\begin{equation} \label{bound_decomposition_sum}
 \sum_{i \geq 2} \frac{M_q}{d_{i-1}^q} \left[ M_p + d_i^p \right]
\end{equation}

Take $d_i = \rho^i M_p^{1/p}$, (\ref{bound_decomposition_sum}) becomes

\begin{align*}
{} &  M_q M_p^{1 - q/p} \rho^{q} \sum_{i \geq 2} [ \rho^{-q i} + \rho^{(p - q)i} ] \\
=  &    M_q M_p^{1 - q/p} \left[ \frac{\rho^{-q}}{1 - \rho^{-q}} + \frac{\rho^{2p - q}}{1 - \rho^{p - q}} \right].
\end{align*}

Assume for example that $\rho \geq 2$ : this implies

\begin{equation*} 
 \sum_{i \geq 2} \pi(C_i) \left[ M_p + d_i^p \right] \leq 4 M_q M_p^{1 - q/p} \rho^{2p-q}.
\end{equation*}

For later use, we set $\zeta = q/p - 2$ and the above yields

\begin{equation} \label{bound_decomposition_sum_2}
2^{1 - 1/p} \left( \sum_{i \geq 2} \pi(C_i) \left[ M_p + d_i^p \right] \right)^{1/p} \leq 4 M_{(\zeta + 2)p}^{1/p} M_p^{-(1 + \zeta)/p} \rho^{-\zeta}. 
\end{equation}

\emph{Step 3.}

We now turn our attention to the term on the left in (\ref{bound_decomposition}). 

Once again, we apply (\ref{induction_cost}) to obtain

\begin{equation*}
 W_p(1/m_1 L_n^1, \pi) \lesssim 4^{-k} d_1 + \sum_{j = 1}^k 4^{-j} \left( \sum_{l = 1}^{m(j)} | ((1/m_1)L_n - \pi) (X_{j, l}) | \right)^{1/p}
\end{equation*}

Multiply by $m_1^{1/p}$ and pass to expectations :

\begin{align*}
 \mathbb{E} \left[ m_1^{1/p} W_p(m_1^{-1} L_n^1, \pi) \right] \lesssim & \, \sum_{j = 1}^k 4^{-j} \left( \sum_{l = 1}^{m(j)} \mathbb{E} | (L_n - m_1 \pi) (X_{j, l}) | \right)^{1/p} \\
{} & + 4^{-k} d_1 \mathbb{E} (m_1^{1/p}).
\end{align*}

First, notice that $0 \leq m_1 \leq 1$ a.s. so that $\mathbb{E} (m_1^{1/p}) \leq 1$. Next, write

\begin{align*}
 \sum_{l = 1}^{m(j)} \mathbb{E} | (L_n - m_1 \pi) (X_{j, l}) | & \leq \sum_{l = 1}^{m(j)} \mathbb{E} \left( | (L_n - \pi) (X_{j, l}) | + | (m_1 \pi - \pi)(X_j,l) |\right) \\
{} & \leq \sum_{l = 1}^{m(j)} \mathbb{E} | (L_n - \pi) (X_{j, l}) | + \mathbb{E}(|m_1 - 1|) \pi(C_1) \\
{} & \leq \sum_{l = 1}^{m(j)} \mathbb{E} | (L_n - \pi) (X_{j, l})| + \mathbb{E} |L_n(C_1) - 1|.
\end{align*}

The first of these two terms is controlled using (\ref{markov_chain_occupation_time}) : we have

\begin{equation*}
 \sum_{l = 1}^{m(j)} \mathbb{E} | (L_n - \pi) (X_{j, l})| \leq \frac{1}{\sqrt{n}} \frac{ 2 \sqrt{2 C} }{ \sqrt{1 - \lambda} } \|\frac{d \nu}{d \pi} \|_{r}^{1/2} m(j)^{1/2 + 1/2r}
\end{equation*}

And on the other hand,

\begin{align*}
 \mathbb{E} |L_n(C_1) - 1| & \leq \mathbb{E} |(L_n - \pi)(C_1)| + \pi(C_1^c) \\
{} & \leq \frac{1}{\sqrt{n}} \frac{ 2 \sqrt{2 C} }{ \sqrt{1 - \lambda} } \|\frac{d \nu}{d \pi} \|_{r}^{1/2} + \pi(C_1^c).
\end{align*}

Here we have used (\ref{markov_chain_occupation_time}) again.

We skip over details here as they are similar to those in previous proofs. Choosing an appropriate value for $k$ and using the estimates above
allows us to recover the following :

\begin{align} \label{bound_decomposition_sum_3}
 \mathbb{E} \left[ m_1^{1/p} W_p(1/m_1 L_n^1, \pi) \right] \lesssim & \left( \frac{C}{(1 - \lambda) n} \|\frac{d \nu}{d \pi} \|_{r} \right)^{1/2p} \int_t^{d_1/4} N(C_1, \delta)^{1/2p(1 + 1/r)}  d \delta \\
\nonumber {} & + \pi(C_1^c)   + t.
\end{align}

The term $\pi(C_1^c)$ is bounded by the Chebyshev inequality :

\begin{equation*}
 \pi(C_1^c) \leq \int x^\zeta d \pi/ d_1^\zeta = \int x^\zeta d \pi \left(\int x^p d \pi \right)^{- \zeta/p} \rho^{-\zeta}.
\end{equation*}

\emph{Step 4.}

Use (\ref{bound_decomposition_sum_2}) and (\ref{bound_decomposition_sum_3}), along with assumption (\ref{assumption_state_space_markov}) : this yields

\begin{equation*}
 \mathbb{E} (W_p(L_n, \pi)) \lesssim K(\zeta) \left( \rho^{- \zeta} + t +  A_n \rho^{\alpha/2p(1 + 1/r)} t^{1-\alpha/2p(1 + 1/r)} \right)
\end{equation*}

where $A_n = \left( \frac{C}{(1 - \lambda) n} \|\frac{d \nu}{d \pi} \|_{r} \right)^{1/2p}$, and

\begin{equation*}
K(\zeta) = \frac{m_\zeta}{m_p^{\zeta/p}} \vee \frac{m_{\zeta + 2p}}{m_p^{1 + \zeta/p}} \vee k_E^{1/2p(1+1/r)} \frac{2p}{\alpha(1 + 1/r)} m_p^{\alpha/(2p^2)(1+1/r)}.
\end{equation*}

The remaining step is optimization in $t$ and $\rho$. We obtain the following result : there exists a constant $C(p, r, \zeta)$ depending only on the values of $p$, $r$, $\zeta)$, such that

\begin{equation*}
 \mathbb{E} (W_p(L_n, \pi)) \lesssim C(p, r, \zeta)  K(\zeta) A_n^{2p/(\alpha(1 + 1/r)(1 + 1/\zeta))}. 
\end{equation*}

There is a caveat : we have used the condition $\rho \geq 2$ at some point, and with this restriction the optimization above is valid only when $A_n \leq C'(p, r, \zeta)$, where the constant
$C'(p, r, \zeta)$ only depends on the values of $p$, $r$, $\zeta$.

\end{proof}

\appendix

\section{Transportation inequalities for Gaussian measures on a Banach space} \label{appendix_gaussian}

Transportation inequalities, also called transportation-entropy inequalities, have been introduced by K. Marton \cite{marton1996bounding}
to study the phenomenon of concentration of measure. M. Talagrand showed that the finite-dimensional Gaussian measures satisfy a $\bt_2$ inequality.
The following appendix contains a simple extension of this result to the infinite-dimensional case. For much more on the topic of transportation inequalities,
the reader may refer to the survey \cite{gozlan2010transport} by N. Gozlan and C. L\'eonard.

For $\mu \in \Pro(E)$, let $H(.|\mu)$ denote the relative entropy with respect to $\mu$ :

\begin{equation*}
 H( \nu | \mu) = \int_E \frac{d \nu} { d \mu} \log \frac{d \nu}{ d \mu} d \mu
\end{equation*}

if $\nu \ll \mu$, and $H( \nu | \mu) = + \infty$ otherwise.

We say that $\mu \in \Pro_p(E)$ satisfies a $\bt_p(C)$ transportation inequality when 

\begin{equation*}
 W_p( \nu, \mu) \leq \sqrt{C H( \nu | \mu)} \quad \forall \nu \in \Pro_p(E)
\end{equation*}

We identify what kind of transport inequality is satisfied by a Gaussian measure on
a Banach space. We remind the reader of the following definition :
let $(E, \mu)$ be a Gaussian Banach space and $X \sim \mu$ be a $E$-valued r.v.. The weak variance
of $\mu$ or $X$ is defined by

\begin{equation*}
 \sigma^2 = \sup_{f \in E^*, |f| \leq 1} \mathbb{E}( f^2(X)).
\end{equation*}

The lemma below is optimal, as shown by the finite-dimensional case.

\begin{lemma} \label{lemma_transport_ineq_gaussian}
 Let $(E, \mu)$ be a Gaussian Banach space, and let $\sigma^2$ denote the weak variance of $\mu$.
Then $\mu$ satisfies a $\bt_2(2 \sigma^2)$ inequality.
\end{lemma}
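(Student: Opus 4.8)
The plan is to establish the $\bt_2(2\sigma^2)$ inequality for the infinite-dimensional Gaussian measure $\mu$ by reducing it to the finite-dimensional case, where Talagrand's theorem already provides the sharp constant, and then passing to the limit via a suitable finite-dimensional approximation of $\mu$. The two pillars of the argument are (i) Talagrand's result that a centered Gaussian measure $\gamma$ on $\R^m$ with covariance $\Sigma$ satisfies $W_2(\nu,\gamma)\leq\sqrt{2\,\|\Sigma\|_{op}\,H(\nu|\gamma)}$, where $\|\Sigma\|_{op}$ is exactly the weak variance in the finite-dimensional setting, and (ii) the fact that on a separable Banach space a centered Gaussian measure is the weak limit (indeed the limit in $W_2$) of its finite-dimensional projections.

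First I would set up the approximation. Since $E$ is separable, choose a sequence $(f_k)_{k\geq1}\subset E^*$ with $|f_k|\leq1$ that is weak-$*$ dense in the unit ball, and let $T_m:E\to\R^m$ be $T_m(x)=(f_1(x),\dots,f_m(x))$. The pushforward $\mu_m=(T_m)_\#\mu$ is a centered Gaussian on $\R^m$, and its weak variance is
\begin{equation*}
\sigma_m^2=\sup_{|u|\leq1,\,u\in\R^m}\mathbb{E}\Big(\big(\textstyle\sum_k u_k f_k(X)\big)^2\Big)\leq\sigma^2,
\end{equation*}
since each linear combination $\sum_k u_k f_k$ lies in $E^*$ with norm at most $1$. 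Talagrand's finite-dimensional theorem then gives $\bt_2(2\sigma_m^2)$, hence $\bt_2(2\sigma^2)$, for every $\mu_m$. The transport inequality for $\mu_m$ can be lifted back to $E$ by a standard disintegration/contraction argument: for $\nu\in\Pro_2(E)$ with $\nu\ll\mu$, one projects $\nu$ to $\nu_m=(T_m)_\#\nu$, uses $H(\nu_m|\mu_m)\leq H(\nu|\mu)$ (data-processing for relative entropy), and controls $W_2$ on $E$ by the projected $W_2$ on $\R^m$ plus an approximation error measuring how well $T_m$ recovers the norm.

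The delicate point, and the main obstacle, is precisely passing from control of $W_2$ on the projections to control of $W_2$ on $E$ itself: a lower bound on the finite-dimensional Wasserstein distance does not immediately give the Banach-space distance, because the map $x\mapsto\sup_k|f_k(x)|$ only recovers the norm in the limit, not for fixed $m$. I would handle this by exploiting that, by the Hahn–Banach theorem and the choice of a norming sequence, $\|x\|=\sup_k|f_k(x)|$, so $\|T_m(x)\|_{\ell^\infty}\uparrow\|x\|$ monotonically; combined with dominated convergence (the dominating random variable $\|X\|$ has all moments by Fernique's theorem), this yields $W_2(\nu,\mu)=\lim_m W_2^{(E)}$-type quantities built from the projections. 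One then takes $m\to\infty$ in the uniform bound $W_2(\nu_m,\mu_m)\leq\sqrt{2\sigma^2 H(\nu|\mu)}$ and uses lower semicontinuity of $W_2$ under the approximating couplings to conclude
\begin{equation*}
W_2(\nu,\mu)\leq\sqrt{2\sigma^2\,H(\nu|\mu)}.
\end{equation*}
Optimality is immediate, as stated, from the finite-dimensional case where the constant $2\sigma^2$ is known to be sharp.
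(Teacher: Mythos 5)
Your reduction to finite dimensions breaks at the variance estimate, and the error is not cosmetic. You claim that $\sigma_m^2=\sup_{|u|_2\le 1}\mathbb{E}\bigl(\bigl(\sum_{k\le m}u_k f_k(X)\bigr)^2\bigr)\le\sigma^2$ ``since each linear combination $\sum_k u_k f_k$ lies in $E^*$ with norm at most $1$''. This is false: for $|u|_2\le 1$ and $|f_k|\le 1$ one only gets $\|\sum_k u_k f_k\|_{E^*}\le\sum_k|u_k|\le\sqrt{m}$. Concretely, take $E=\R$, $\mu$ the standard Gaussian (so $\sigma^2=1$), and $f_1=f_2=\mathrm{id}$; then $u=(1/\sqrt{2},1/\sqrt{2})$ gives the functional $\sqrt{2}\,\mathrm{id}$, and the covariance matrix of $\mu_2$ is the all-ones matrix, with operator norm $2>\sigma^2$. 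In general a norming sequence $(f_k)$ (which you need so that $\sup_k|f_k(x)|\uparrow\|x\|$) necessarily contains many nearly-aligned functionals, and $\|\Sigma_m\|_{op}$ then grows with $m$; the Euclidean Talagrand inequality you invoke therefore only yields the constant $2\|\Sigma_m\|_{op}$, which is not uniformly bounded by $2\sigma^2$, and the limiting argument collapses.

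The underlying problem is a mismatch between the two norms you are forced to use on $\R^m$: the cost that approximates the Banach distance is the $\ell^\infty$ cost ($\max_{k\le m}|f_k(x-y)|\uparrow\|x-y\|$), and with respect to the $\ell^\infty$ norm the weak variance of $\mu_m$ \emph{is} bounded by $\sigma^2$ (its dual ball is the $\ell^1$ ball, and $\|\sum_k u_k f_k\|_{E^*}\le\sum_k|u_k|\le 1$). But Talagrand's theorem in the form you cite (Euclidean cost, constant $\|\Sigma\|_{op}$) says nothing about $\bt_2$ on $(\R^m,\ell^\infty)$ with the weak-variance constant; that statement is precisely the finite-dimensional case of the lemma you are trying to prove. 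To obtain it you need an additional ingredient: write $\mu_m=A_{\#}\gamma^m$ with $\gamma^m$ standard Gaussian, observe that $A$ is $\sigma$-Lipschitz from $(\R^m,\ell^2)$ to $(\R^m,\ell^\infty)$, and use that the pushforward of a $\bt_2(2)$ measure by a $\sigma$-Lipschitz map satisfies $\bt_2(2\sigma^2)$. This Lipschitz-pushforward step is exactly the engine of the paper's proof, which represents $\mu$ as the law of $\sum_i g_i x_i$, pushes $\gamma^n$ forward by the $\sigma$-Lipschitz linear map $a\mapsto\sum_i a_i x_i$, and then concludes by stability of $\bt_2$ under weak convergence (both stability results quoted from Djellout--Guillin--Wu). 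Your remaining limiting machinery (data processing for relative entropy, monotone costs, semicontinuity of the transport cost) is sound in outline, but without the pushforward ingredient the proof does not close, and once you add it you have essentially reconstructed the paper's argument.
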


\begin{proof}
According e.g. to \cite{ledoux1991probability}, there exists a sequence $(x_i)_{i \geq 1}$ in
$E$ and an orthogaussian sequence $(g_i)_{i \geq 1}$ (meaning a sequence of i.i.d. standard normal variables)
such that

\begin{equation*}
 \sum_{i \geq 1} g_i x_i \sim \mu,
\end{equation*}

where convergence of the series holds a.s. and in all the $L^p$'s. In particular, the laws $\mu_n$
of the partial sums $\sum_{i = 1}^n g_i x_i$ converge weakly to $\mu$.

As a consequence of the stability result of Djellout-Guillin-Wu (Lemma 2.2 in \cite{djellout_guillin_wu}) showing that
$\bt_2$ is stable under weak convergence, it thus suffices to show that the measures $\mu_n$ all satisfy
the $\bt_2(2 \sigma^2)$ inequality.

First, by definition of $\sigma$, we have

\begin{equation*}
 \sigma^2 = \sup_{f \in E^*, |f| \leq 1} \mathbb{E} (\sum_{i = 1}^{+ \infty}  f(x_i) g_i)^2
\end{equation*}

and since $(g_i)$ is an orthogaussian sequence, the sum is equal to $\sum_{i = 1}^{+ \infty} f^2(x_i)$.

Consider the mapping 

\begin{align*}
 T : & (\R^n, N) \rightarrow (E, \|.\|) \\
{} & (a_1, \ldots, a_n) \mapsto \sum_{i = 1}^n a_i x_i.
\end{align*}

(here
$\R^n$ is equipped with the Euclidean norm $N$). With the remark above it is easy to check that 
$\|T(a)\| \leq \sigma N(a)$ for $a \in \R^n$. Consequently, $T$ is $\sigma$-Lipschitz, and
we can use the second stability result of Djellout-Guillin-Wu (Lemma 2.1 in \cite{djellout_guillin_wu}) :
the push forward of a measure satisfying $\bt_2(C)$ by a $L$-Lipschitz function satisfies $\bt_2(L^2C)$.
As is well-known, the standard Gaussian measure $\gamma^n$ on $\R^n$ satisfies $\bt_2(2)$ and thus
$T_\# \gamma^n$ satisfies $\bt_2(2 \sigma^2)$. But it is readily checked that
$T_\# \gamma^n = \mu_n$, which concludes this proof.

\end{proof}

\begin{remark}
 M.Ledoux indicated to us another way to obtain this result. First, one shows that the Gaussian measure satisfies a $\bt_2(2)$ inequality when considering the cost 
function $c = d_H^2$, where $d_H$ denotes the Cameron-Martin metric on $E$ inherited from the scalar product on the Cameron-Martin space. This can be done in a number of ways,
for example by tensorization of the finite-dimensional $\bt_2$ inequality for Gaussian measures or by adapting the Hamilton-Jacobi arguments of Bobkov-Gentil-Ledoux
 \cite{bobkov2001hypercontractivity} in the infinite-dimensional setting. It then suffices to observe that this transport inequality implies the 
one we are looking for since we have the bound $d \leq \sigma d_H$ (here $d$ denotes the metric inherited from the norm of the Banach space).
\end{remark}

Let $L_n$ denote the empirical measure associated with $\mu$.
As a consequence of Lemma \ref{lemma_transport_ineq_gaussian}, we can give an inequality for the concentration of $W_2(L_n, \mu)$ around its mean, using results from
transportation inequalities. This is acutally a simple case of more general  results of N. Gozlan and C. L\'eonard (\cite{large_dev_gozlan_leonard}, \cite{gozlan2010transport}),
 we reproduce a proof here for convenience.

\begin{corollary} \label{corollary_transport_ineq_gaussian}
 Let $\mu$ be as above. The following holds :

\begin{equation*}
 \pr( W_2(L_n, \mu) \geq \mathbb{E} [ W_2(L_n, \mu) ] + t) \leq e^{- n t^2/(2 \sigma^2)}.
\end{equation*}

\end{corollary}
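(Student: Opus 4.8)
The statement is a dimension-free Gaussian concentration bound for the Lipschitz functional $F(x_1,\dots,x_n) = W_2\big(\tfrac1n\sum_{i=1}^n \delta_{x_i}, \mu\big)$ viewed as a function on the product space $E^n$, and the natural route is to extract it from the transportation inequality of Lemma \ref{lemma_transport_ineq_gaussian} by tensorization and duality. The plan is to tensorize the $\bt_2(2\sigma^2)$ inequality to the product measure $\mu^{\otimes n}$, weaken it to a $\bt_1$ inequality, invoke the Bobkov--G\"otze dual characterization to obtain a sub-Gaussian Laplace-transform bound for $1$-Lipschitz functions, and finally apply this to $F$ after checking that $F$ is $1/\sqrt n$-Lipschitz.

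\textbf{Steps.} First, by Lemma \ref{lemma_transport_ineq_gaussian} the measure $\mu$ satisfies $\bt_2(2\sigma^2)$ on $(E, \|\cdot\|)$. Next I would use the tensorization property of $\bt_2$ inequalities (see e.g. \cite{gozlan2010transport}): equipping $E^n$ with the $\ell^2$ product metric $d_n(x,y)^2 = \sum_{i=1}^n \|x_i - y_i\|^2$, the product measure $\mu^{\otimes n}$ again satisfies $\bt_2(2\sigma^2)$, with the same constant. Since $W_1 \le W_2$, this in particular yields a $\bt_1(2\sigma^2)$ inequality for $\mu^{\otimes n}$ on $(E^n, d_n)$. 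I would then appeal to the dual (Bobkov--G\"otze) characterization of $\bt_1$: for every $d_n$-$1$-Lipschitz function $G$ on $E^n$ and every $\lambda \ge 0$,
\begin{equation*}
\int e^{\lambda(G - \int G\, d\mu^{\otimes n})}\, d\mu^{\otimes n} \le e^{\sigma^2 \lambda^2/2},
\end{equation*}
whence $\pr(G - \mathbb{E}G \ge s) \le e^{-s^2/(2\sigma^2)}$ after the usual Chernoff optimization in $\lambda$.

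\textbf{Main obstacle.} The crux is the Lipschitz estimate for the $W_2$-functional, which fixes the interplay between the constant $\sigma^2$ and the sample size $n$. For any two samples $x, y \in E^n$, the triangle inequality for $W_2$ gives
\begin{equation*}
|F(x) - F(y)| \le W_2\Big(\tfrac1n\sum_i \delta_{x_i}, \tfrac1n\sum_i \delta_{y_i}\Big) \le \Big(\tfrac1n \sum_{i=1}^n \|x_i - y_i\|^2\Big)^{1/2} = \tfrac{1}{\sqrt n}\, d_n(x,y),
\end{equation*}
where the middle bound uses the coupling pairing $x_i$ with $y_i$, each carrying mass $1/n$. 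Thus $G := \sqrt n\, F$ is $d_n$-$1$-Lipschitz, and applying the concentration bound above with $s = \sqrt n\, t$ gives exactly $\pr(F - \mathbb{E}F \ge t) \le e^{-n t^2/(2\sigma^2)}$. The main point to get right is that tensorization preserves the constant $2\sigma^2$ precisely under the $\ell^2$ product metric (rather than, say, $\ell^1$), since it is this choice that matches the $1/\sqrt n$ Lipschitz constant of $F$ and produces the factor $n$ in the exponent; the remaining steps are routine.
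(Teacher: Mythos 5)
Your proposal is correct and takes essentially the same route as the paper: tensorization of the $\bt_2(2\sigma^2)$ inequality to $\mu^{\otimes n}$ under the $\ell^2$ product metric, weakening to $\bt_1$ via $W_1 \le W_2$, the dual concentration bound for $1$-Lipschitz functions (the paper cites Gozlan--L\'eonard where you invoke Bobkov--G\"otze, but these are the same mechanism), and finally the Lipschitz estimate for the $W_2$ functional. In fact your computation of the Lipschitz constant is the correct one: the functional $F(x) = W_2(L_n^x,\mu)$ is $1/\sqrt{n}$-Lipschitz with respect to $d_2$ (the paper's phrase ``$\sqrt{n}$-Lipschitz'' is a slip), and it is precisely your rescaling $G = \sqrt{n}\,F$, $s = \sqrt{n}\,t$ that yields the factor $n$ in the exponent.
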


\begin{proof}
 The proof relies on the property of dimension-free tensorization of the $\bt_2$ inequality, see \cite{gozlan2010transport}.
Since $\mu$ satisfies $\bt_2(2 \sigma^2)$, the product measure $\mu^{ \otimes n}$ on the product space $E^n$ endowed with the $l_2$
metric 

\begin{equation*}
 d_2 ( (x_1, \ldots, x_n), (y_1, \ldots, y_n)) = \sqrt{ |x_1 - y_1|^2 + \ldots + |x_n - y_n|^2}
\end{equation*}

also satisfies a $\bt_2( 2 \sigma^2)$ inequality (\cite{gozlan2010transport}, Corollary 4.4). Therefore, it also satisfies a $\bt_1$ inequality by Jensen's inequality, and
this implies that we have the concentration inequality

\begin{equation*}
 \mu^{\otimes n} ( f \geq \int f d \mu^{\otimes n} + t) \leq e^{- t^2/(2 \sigma^2)}
\end{equation*}

for all $1$-Lipschitz functions $f : (E^n, d_2) \rightarrow \R$ (\cite{gozlan2010transport}, Theorem 1.7).
For $x = (x_1, \ldots, x_n) \in E^n$, denote $L_n^x = 1/n \sum_{i = 1}^n \delta x_i$. 
To conclude it suffices to notice that $(x_1, \ldots, x_n) \rightarrow W_2( L_n^x, \mu)$ is $\sqrt{n}$-Lipschitz from $(E^n, d_2)$ to $\R$.
\end{proof}

\bibliography{exact_deviations}{}
\bibliographystyle{plain}

\end{document}